\renewcommand{\@seccntformat}[1]{\bf\csname the#1\endcsname.}
\renewcommand{\section}{\@startsection{section}{1}
	\z@{.7\linespacing\@plus\linespacing}{.5\linespacing}
	{\normalfont\upshape\bfseries\centering}}
\renewcommand{\@biblabel}[1]{\@ifnotempty{#1}{#1.}}
\theoremstyle{plain}
\newtheorem{thm}{Theorem}[section]
\newtheorem{lem}[thm]{Lemma}
\newtheorem{prop}[thm]{Proposition}
\theoremstyle{definition}
\newtheorem{defn}[thm]{Definition}
\newtheorem{rem}{Remark}[section]
\def \>{\succ}
\def \<{\prec}
\begin{document}	
	\title[Nil MANSURO\u{G}LU \textsuperscript{}, Bouzid Mosbahi \textsuperscript{}]{ Generalized derivations of BiHom-supertrialgebras}
	\author{ Nil MANSURO\u{G}LU \textsuperscript{1},\quad Bouzid Mosbahi \textsuperscript{2}}
\address{\textsuperscript{1}Kırşehir Ahi Evran University, Faculty of Arts and Science,
Department of Mathematics, 40100 Kırşehir, Turkey.}
   	\address{\textsuperscript{2} Department of Mathematics, Faculty of Sciences, University of Sfax, Sfax, Tunisia}
		
\email{\textsuperscript{1}nil.mansuroglu@ahievran.edu.tr}
\email{\textsuperscript{2}mosbahi.bouzid.etud@fss.usf.tn}
	
	\keywords{BiHom-supertrialgebras, generalized derivations.}
	\subjclass[2010]{16Z05, 16D70, 17A60, 17B05, 17B40}
	
	
	\begin{abstract}


 In this note, our goal is to describe the concept of generalized derivations in the context of BiHom-supertrialgebras. We provide a comprehensive analysis of the properties and applications of these
generalized derivations, including their relationship with other algebraic structures. We also explore various examples and applications of BiHom-supertrialgebras in different fields
of mathematics and physics. Our findings contribute to a deeper understanding of the algebraic properties and applications of BiHom-supertrialgebras, and pave the way for further research in this area.
\end{abstract}
\maketitle \section{ Introduction}

BiHom-supertrialgebras are a generalization of supertrialgebras \cite{9}, which are algebraic structures that combine the properties of superalgebras and trialgebras. Supertrialgebras have found
applications in different fields of mathematics and physics, including quantum groups, representation theory  and supersymmetry.
The goal of this paper is to provide a comprehensive overview of generalized derivations and applications of BiHom-supertrialgebras. Generalized derivations are linear maps that satisfy
certain algebraic properties and they also have an  significant position in the study of algebraic structures. On the other hand,  BiHom-supertrialgebras  are supertrialgebras equipped with two compatible
derivations. Generalized derivations in the context of BiHom-supertrialgebras have many applications in various fields. These
applications include  the study of Lie superalgebras, the construction of new algebraic structures and the investigation of differential equations. For more details see \cite{1,2}. To provide a comprehensive understanding of the topic, we will begin by introducing the basic definitions and properties of supertrialgebras and BiHom-supertrialgebras by using \cite{4,9,11,15,16}. We will then
delve into the theory of generalized derivations and explore their properties in the context of BiHom-supertrialgebras.
Furthermore, we will discuss various applications of generalized derivations in BiHom-supertrialgebras. These applications include the construction of Lie superalgebras from BiHom-supertrialgebras,
the study of differential equations associated with BiHom-supertrialgebras, and the investigation of the cohomology theory of BiHom-supertrialgebras. For more details see \cite{10,13,14,17}.
Throughout the paper, we will provide detailed proofs and examples to illustrate the concepts and results. We will also highlight the connections between the different areas of study
and discuss the potential future directions of research in this field.

\section{ Preliminaries}

\begin{defn}
Given a superspace $S$. If three even bilinear maps $\dashv,\vdash,\bot: S \times S \longrightarrow S$ and an even superspace homomorphism $\gamma : S \longrightarrow S$ satisfy the following axioms
$$\begin{array}{ll}
\gamma(d \dashv q) = \gamma(d) \dashv \gamma(q),\\
\gamma(d \bot q) = \gamma(d) \bot \gamma(q),\\
(d \dashv q) \dashv \gamma(y) = \gamma(d) \dashv (q \vdash y),\\
(d \dashv q) \vdash \gamma(y) = \gamma(d) \vdash (q \vdash y),\\
(d \dashv q) \dashv \gamma(y) = \gamma(d) \dashv (q \bot y),\\
(d \dashv q) \bot \gamma(y) = \gamma(d) \bot (q \vdash y),\\
(d \bot q) \vdash \gamma(y) = \gamma(d) \vdash (q \vdash y),\\
\end{array}\quad\quad
\begin{array}{ll}
\gamma(d \vdash q) = \gamma(d) \vdash \gamma(q),\\
\quad (d \dashv q) \dashv \gamma(y) = \gamma(d) \dashv (q \dashv y),\\
\quad (d \vdash q) \dashv \gamma(y) = \gamma(d) \vdash (q \dashv y),\\
\quad (d \vdash q) \vdash \gamma(y) = \gamma(d) \vdash (q \vdash y),\\
\quad (d \bot q) \dashv \gamma(y) = \gamma(d) \bot (q \dashv y),\\
\quad (d \vdash q) \bot \gamma(y) = \gamma(d) \vdash (q \bot y),\\
\quad (d \bot q) \bot \gamma(y) = \gamma(d) \bot (q \bot y)
\end{array}$$
for every $d,q,y\in S$, then  a quintuple $(S,\dashv,\vdash,\bot,\gamma)$ is called a Hom-associative supertrialgebra.
\end{defn}
\begin{rem}
 By taking $\gamma = id$, we obtain the associative supertrialgebra. Any Hom-associative supertrialgebra becomes a Hom-associative superdialgebra without the operation $\bot$.
\end{rem}
\begin{defn}\label{def2}
Let $S$ be a superspace.  If three even bilinear mappings $\dashv,\vdash,\bot: S \times S \longrightarrow S$ and two even superspace homomorphisms $\gamma,\xi : S \longrightarrow S$ satisfy the next axioms
\begin{align*}
&\text{(i)}\quad \gamma\circ \xi = \xi \circ\gamma, \\
&\text{(ii)}\quad(d\dashv q)\dashv\xi(y)=\gamma(d)\dashv(q\vdash r)=\gamma(d)\dashv(q\bot y),\\
&\text{(iii)}\quad (d\dashv q)\dashv\xi(y)=\gamma(d)\vdash(q\dashv y),\\
&\text{(iv)}\quad (d\dashv q)\vdash\gamma(y)=\xi(d)\vdash(q\vdash y)=(d\bot q)\vdash\xi(y),\\
&\text{(v)}\quad (d\bot q)\dashv\xi(y)=\gamma(d)\bot(q\dashv y),\\
&\text{(vi)}\quad (d\dashv q)\bot\xi(y)=\gamma(d)\bot(q\vdash y),\\
&\text{(vii)}\quad (d\vdash q)\bot\xi(y)=\gamma(d)\vdash(q\bot y)
\end{align*}
for every $d,q,y \in S$, then   a 6-truple $(S,\dashv,\vdash,\bot,\gamma,\xi)$   is called a BiHom-associative supertrialgebra.
\end{defn}

\begin{rem}\label{rq1}
 $\gamma$ and $\xi$ are endomorphisms with  $\dashv, \vdash$ and $\bot$, namely,
\begin{align*}
&\gamma(d\dashv q)=\gamma(d)\dashv\gamma(q), \quad \xi(d\dashv q)=\xi(d)\dashv\xi(q),\nonumber\\
&\gamma(d\vdash q)=\gamma(d)\vdash\gamma(q), \quad \xi(d\vdash q)=\xi(d)\vdash\xi(q),\nonumber\\
&\gamma(d\bot q)=\gamma(d)\bot\gamma(q), \quad \xi(d\bot q)=\xi(d)\bot\xi(q)\nonumber
\end{align*}
for each  $d, q \in S$. Then,
we say $S$ is  a multiplicative BiHom-associative supertrialgebra.
\end{rem}
\begin{defn} \label{def1}
 A linear mapping
$$\pi : (S_1, \dashv, \vdash,\bot, \gamma, \xi)\rightarrow(S_2, \dashv',\vdash',\bot', \gamma', \xi')$$ with
$$\gamma'\circ \pi
=\pi\circ\gamma,\;\; \xi'\circ \pi=\pi\circ\xi$$
and
\begin{align*}
 \pi(d\dashv q)&=\pi(d)\dashv'\pi(q),\nonumber\\
\pi(d\vdash q)&=\pi(d)\vdash'\pi(q),\nonumber\\
\pi(d\bot q)&=\pi(d)\bot'\pi(q)\nonumber
\end{align*}
 for every  $d, q \in S_1$  is said to be a  morphism of BiHom-associative supertrialgebras.
\end{defn}



\begin{prop}\label{p1}
Let $(S, \dashv, \vdash,\bot ,\gamma, \xi)$ be a   BiHom-associative supertrialgebra of dimension $n$.     Given an invertible linear map $l: S\rightarrow S$.
Then, there exists an isomorphism with a  BiHom-associative supertrialgebra  $(S, \dashv', \vdash', \bot', l\gamma\xi^{-1}, l\xi l^{-1})$, where
$(\dashv', \vdash', \bot')=l\circ(\dashv, \vdash, \bot)\circ(l^{-1}\otimes l^{-1}$). Furthermore, if
the structure constants of $(\dashv, \vdash \bot)$ with the basis
$\left\{u_1,\dots,u_n\right\}$ are  $\left\{\phi^k_{ij}, \psi^k_{ij}, \delta^k_{ij}\right\}$,   then  the structure constants of $(\dashv', \vdash', \bot')$
 with  the basis $\left\{l(u_1),\dots,l(u_n)\right\}$ are same.
\end{prop}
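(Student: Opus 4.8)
The plan is to exhibit $(S,\dashv',\vdash',\bot',\gamma',\xi')$ as the twist of the given BiHom-associative supertrialgebra along $l$ and to check that twisting by an invertible even linear map preserves all the axioms of Definition \ref{def2}, with $l$ itself playing the role of the isomorphism. Set $\gamma'=l\gamma l^{-1}$ and $\xi'=l\xi l^{-1}$, and for homogeneous $x,y\in S$ define $x\dashv' y=l(l^{-1}(x)\dashv l^{-1}(y))$, $x\vdash' y=l(l^{-1}(x)\vdash l^{-1}(y))$ and $x\bot' y=l(l^{-1}(x)\bot l^{-1}(y))$. Since $l$ and the original operations and twisting maps are all even, the six new maps are even as well, so the twisted data stay in the category of superspaces; if one does not wish to assume $l$ even, this evenness should be added to the hypotheses.

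First I would dispatch axiom (i): $\gamma'\xi'=l\gamma l^{-1}l\xi l^{-1}=l(\gamma\xi)l^{-1}=l(\xi\gamma)l^{-1}=\xi'\gamma'$. Then, for axioms (ii)--(vii), I would use the routine ``conjugate everything through $l$'' device: substituting the definitions turns each twisted identity $P'(d,q,y)=Q'(d,q,y)$ into $l\big(P(l^{-1}d,\,l^{-1}q,\,l^{-1}y)\big)=l\big(Q(l^{-1}d,\,l^{-1}q,\,l^{-1}y)\big)$, where $P,Q$ are exactly the two sides of the corresponding untwisted axiom. Since that axiom holds in $S$ and $l^{-1}$ is a bijection of $S$, applying $l$ yields the twisted relation. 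For instance, twisted (iii) becomes $(d\dashv'q)\dashv'\xi'(y)=l\big((l^{-1}d\dashv l^{-1}q)\dashv\xi(l^{-1}y)\big)$ and $\gamma'(d)\vdash'(q\dashv'y)=l\big(\gamma(l^{-1}d)\vdash(l^{-1}q\dashv l^{-1}y)\big)$, which agree by the untwisted (iii); the compound equalities inside (ii) and (iv) are treated the same way, one equality at a time. This proves $(S,\dashv',\vdash',\bot',\gamma',\xi')$ is a BiHom-associative supertrialgebra.

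Next I would verify that $l$ is a morphism in the sense of Definition \ref{def1}: by definition of $\dashv'$ one has $l(d)\dashv' l(q)=l\big(l^{-1}l(d)\dashv l^{-1}l(q)\big)=l(d\dashv q)$, and likewise $l(d)\vdash' l(q)=l(d\vdash q)$ and $l(d)\bot' l(q)=l(d\bot q)$, while $\gamma'\circ l=l\gamma l^{-1}l=l\circ\gamma$ and $\xi'\circ l=l\circ\xi$. As $l$ is invertible, it is an isomorphism, which is the first claim.

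For the structure constants, observe that $\{l(u_1),\dots,l(u_n)\}$ is a basis of $S$ since $l$ is invertible. Writing $u_i\dashv u_j=\sum_k\phi^k_{ij}u_k$, $u_i\vdash u_j=\sum_k\psi^k_{ij}u_k$ and $u_i\bot u_j=\sum_k\delta^k_{ij}u_k$, linearity of $l$ together with the morphism property gives $l(u_i)\dashv' l(u_j)=l(u_i\dashv u_j)=\sum_k\phi^k_{ij}\,l(u_k)$, and similarly $l(u_i)\vdash' l(u_j)=\sum_k\psi^k_{ij}\,l(u_k)$ and $l(u_i)\bot' l(u_j)=\sum_k\delta^k_{ij}\,l(u_k)$; hence the structure constants in the basis $\{l(u_i)\}$ are again $\{\phi^k_{ij},\psi^k_{ij},\delta^k_{ij}\}$. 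I expect no genuine obstacle: the whole argument is the bookkeeping through the seven (partly compound) relations of Definition \ref{def2}, and the only delicate point is the evenness of $l$, which is what keeps the twisted operations even.
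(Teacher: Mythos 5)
Your proposal is correct and follows essentially the same route as the paper's proof: twist the three products by $l\circ(\cdot)\circ(l^{-1}\otimes l^{-1})$, conjugate each axiom of Definition \ref{def2} through $l$ to reduce it to the untwisted one, observe that $l$ is then a morphism by construction, and read off the unchanged structure constants in the basis $\{l(u_i)\}$. You are in fact a bit more careful than the paper, spelling out axiom (i), flagging the evenness of $l$, and silently correcting the typo $l\gamma\xi^{-1}$ to $l\gamma l^{-1}$, but the argument is the same.
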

\begin{proof}
Here, one axiom only will be proved.
For any invertible linear mapping $l: S \rightarrow S$,  $(S, \dashv', \vdash',  \bot',  l\gamma l^{-1}, l\xi l^{-1})$ is a
BiHom-associative supertrialgebra.
\begin{align*}
(u\dashv'v)\dashv'l\xi l^{-1}(w)
&=l\dashv(l^{-1}\otimes l^{-1})(u,v)\dashv'l\xi l^{-1}(w)\\
&=(l(l^{-1}\otimes l^{-1})(u\dashv v))\dashv'l\xi l^{-1}(w)\\
&=l(l^{-1}\otimes l^{-1})(l(l^{-1}\otimes l^{-1})(u\dashv v)\dashv l^{-1}\xi l(w)\\
&=l(l^{-1}(u)\dashv l^{-1}(v))\dashv l^{-1}\xi l(w)\\
&=l(\gamma l^{-1}(u)\dashv(l^{-1}(v)\vdash l^{-1}(w)))\\
&=l(l^{-1}\otimes l^{-1})(l\otimes l)\gamma l^{-1}(u)\dashv(l^{-1}\otimes l^{-1}(v\vdash w))\\
&=l(l^{-1}\otimes l^{-1})(l\gamma l^{-1}(u)\dashv(l(l^{-1}\otimes l^{-1})(v\vdash w)))\\
&=l\gamma l^{-1}(u)\dashv'(v\vdash'w)= l\gamma l^{-1}(u)\dashv'(v\bot'w).
\end{align*}
Thus, $(S, \dashv', \vdash', \bot',  l\gamma l^{-1}, l\xi l^{-1})$ becomes a BiHom-associative supertrialgebra.\\
Moreover,  for $\gamma$
\begin{align*}
l\gamma l^{-1}(u\dashv'v)
&=l\gamma l^{-1}l\dashv(l^{-1}\otimes l^{-1})(u,v)\\
&=l\gamma\dashv(l^{-1}\otimes l^{-1})(u,v)\\
&=l(\gamma l^{-1}(u)\dashv\gamma l^{-1}(v))\\
&=l((l^{-1}\otimes l^{-1})(l\otimes l)\gamma l^{-1}(u)\dashv\gamma l^{-1}(v))
=(l\gamma l^{-1}(u)\dashv'l\gamma l^{-1}(v))
\end{align*}
and for $\xi$
\begin{align*}
l\xi l^{-1}(u\dashv'v)
&=l\xi l^{-1} l\dashv(l^{-1}\otimes l^{-1})(u,v)\\
&=l\xi\dashv(l^{-1}\otimes l^{-1})(u,v)\\
&=l(\xi l^{-1}(u)\dashv\xi l^{-1}(v))\\
&=l((l^{-1}\otimes l^{-1})(l\otimes l)\xi l^{-1}(u)\dashv\xi l^{-1}(v))
=(l\xi l^{-1}(u)\dashv'l\xi l^{-1}(v)).
\end{align*}

Therefore, $l : (S, \dashv, \vdash, \bot,  \gamma, \xi)\rightarrow (S, \dashv', \vdash', \bot',  l\gamma l^{-1}, l\xi l^{-1})$ is a
BiHom-associative supertrialgebra morphism, as $$l\circ\dashv=l\circ\dashv\circ(l^{-1}\otimes l^{-1})\circ(l\otimes l)=l'\circ(l\otimes l)$$ and $$(l\gamma l^{-1})\circ l=l\circ\gamma, \quad (l\xi l^{-1})\circ l=l\circ\xi.$$ \\ It is obvious to see  that
$\left\{l(u_i), \dots, l(u_n)\right\}$ is a basis of $S$. For each $i,j=1,\dots,n$, we have

\begin{align*}
(l(u_i)\dashv l(u_j))
&=l(l^{-1}(u_i)\dashv l^{-1}(u_j))=l(u_i\dashv u_j)=\sum_{k=1}^n\delta^k_{ij}l(u_k).
\end{align*}
\end{proof}

\begin{prop}
Let $(S,  \dashv, \vdash, \bot, \gamma, \xi)$ be a BiHom-associative supertrialgebra and  $(S, \dashv', \vdash', \bot', l\gamma l^{-1}, l\xi l^{-1})$ be as described in Proposition \ref{p1}. If $\phi$ is an automorphism of $(S, \dashv, \vdash, \bot,  \gamma, \xi)$, then
$l\phi l^{-1}$ is an automorphism of $(S, \dashv',\vdash', \bot',  l\gamma l^{-1}, l\xi l^{-1})$.
\end{prop}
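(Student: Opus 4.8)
The plan is to verify directly that the conjugated map $l\phi l^{-1}$ satisfies the three defining conditions of a morphism of BiHom-associative supertrialgebras from Definition \ref{def1}, applied to the primed structure $(S, \dashv', \vdash', \bot', l\gamma l^{-1}, l\xi l^{-1})$, together with bijectivity. Since $\phi$ and $l$ are invertible, $l\phi l^{-1}$ is automatically invertible, so only the compatibility with the three products and with the two structure maps needs checking. This is a purely computational verification that exploits the definitions $\dashv' = l\circ\dashv\circ(l^{-1}\otimes l^{-1})$ (and similarly for $\vdash', \bot'$) together with the hypothesis that $\phi$ is an automorphism of the original structure.

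First I would check compatibility with the products. For $\dashv'$, compute
\begin{align*}
l\phi l^{-1}(u\dashv' v) &= l\phi l^{-1}\bigl(l(l^{-1}(u)\dashv l^{-1}(v))\bigr) = l\phi(l^{-1}(u)\dashv l^{-1}(v))\\
&= l\bigl(\phi l^{-1}(u)\dashv \phi l^{-1}(v)\bigr) = l\phi l^{-1}(u)\dashv' l\phi l^{-1}(v),
\end{align*}
where the third equality uses that $\phi$ respects $\dashv$, and the last step unwinds the definition of $\dashv'$ exactly as in the proof of Proposition \ref{p1}. The identical computation works verbatim for $\vdash'$ and $\bot'$, since $\phi$ respects $\vdash$ and $\bot$ as well. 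Next, for compatibility with the structure maps, I would use the commutation $\phi\gamma = \gamma\phi$ (part of $\phi$ being an automorphism): then
\begin{align*}
(l\gamma l^{-1})(l\phi l^{-1}) = l\gamma\phi l^{-1} = l\phi\gamma l^{-1} = (l\phi l^{-1})(l\gamma l^{-1}),
\end{align*}
and similarly $(l\xi l^{-1})(l\phi l^{-1}) = (l\phi l^{-1})(l\xi l^{-1})$ from $\phi\xi = \xi\phi$. This establishes all the required identities.

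Finally, I would note that $l\phi l^{-1}$ is a bijection of $S$ onto itself, being a composite of bijections, so together with the morphism identities just verified it is an automorphism of $(S, \dashv', \vdash', \bot', l\gamma l^{-1}, l\xi l^{-1})$. There is no genuine obstacle here: the argument is a routine ``transport of structure'' verification, and the only point that requires care is bookkeeping the insertions of $l^{-1}l = \mathrm{id}$ when rewriting expressions involving the primed operations in terms of the original ones — exactly the manipulation already performed in the proof of Proposition \ref{p1}. One could alternatively phrase the whole thing more slickly: by Proposition \ref{p1}, $l$ is an isomorphism from $(S,\dashv,\vdash,\bot,\gamma,\xi)$ onto $(S,\dashv',\vdash',\bot',l\gamma l^{-1},l\xi l^{-1})$, and conjugation by an isomorphism carries automorphisms of the source to automorphisms of the target; I would likely present the explicit computation for concreteness but remark on this conceptual shortcut.
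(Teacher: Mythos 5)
Your proof is correct and follows essentially the same route as the paper's: verify that $l\phi l^{-1}$ respects the primed products by unwinding $\dashv' = l\circ\dashv\circ(l^{-1}\otimes l^{-1})$ and using that $\phi$ respects $\dashv,\vdash,\bot$, and check commutation with $l\gamma l^{-1}$ and $l\xi l^{-1}$ by the conjugation identity $l\phi l^{-1}\cdot l\gamma l^{-1} = l\phi\gamma l^{-1} = l\gamma\phi l^{-1}$. The only differences are cosmetic (you compute on general elements $u,v$ rather than on $l(p),l(v)$, and you add the explicit bijectivity remark and the ``transport of structure'' observation), so there is nothing to correct.
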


\begin{proof}
By taking $\alpha=l\gamma l^{-1}$, we have
$$l\phi l^{-1}\alpha=l\phi l^{-1} l\gamma l^{-1}=l\phi\gamma l^{-1}=l\gamma\phi l^{-1}=l\gamma l^{-1} l\phi l^{-1}=\gamma l\phi l^{-1}.$$
For $\xi$, we take $\sigma=l\xi l^{-1}$. Consequently, we get
$$l\phi l^{-1}\sigma=l\phi l^{-1} l\xi l^{-1}=l\phi\xi l^{-1}=l\xi\phi l^{-1}=l\xi l^{-1} l\phi l^{-1}=\sigma l\phi l^{-1}.$$
For every $p,v\in S,$
\begin{align*}
 l\phi l^{-1}(l(p)\dashv'l(v))&=l\phi l^{-1} l(p\dashv v)
 =l\phi(p\dashv v)=l(\phi(p)\dashv\phi(v))\\
 &=(l\phi(p)\dashv' l\phi(v))
 =(l\phi l^{-1}(l(p)\dashv'\phi l\phi^{-1}(\phi(v))).
 \end{align*}
 By Definition \ref{def1}, $l\phi l^{-1}$ becomes an automorphism of $(S, \dashv', \vdash',  \bot', \gamma l^{-1}, l\xi l^{-1})$.
 \end{proof}



\begin{prop}
  Given two
 BiHom-associative supertrialgebras $(\mathcal{N},  \dashv_{\mathcal{N}}, \vdash_{\mathcal{N}}, \bot_{\mathcal{N}}, \gamma_{\mathcal{N}}, \xi_{\mathcal{N}})$ and
$(\mathcal{M}, \dashv_{\mathcal{M}}, \vdash_{\mathcal{M}},\bot_{\mathcal{M}},  \gamma_{\mathcal{M}}, \xi_{\mathcal{M}})$. Then,  a BiHom-associative supertrialgebra structure becomes
on $\mathcal{N}\oplus \mathcal{M}$ with the bilinear mappings $\triangleleft, \triangleright, \ast : (\mathcal{N}\oplus \mathcal{M})^{\otimes 2}\rightarrow \mathcal{N}\oplus \mathcal{M}$
  defined by
$$(n_1+m_1)\dashv(n_2+m_2) :=n_1\dashv_\mathcal{N}n_2+m_2\dashv_\mathcal{M}m_2,$$
$$(n_1+m_1)\vdash(n_2+m_2) :=n_1\vdash_\mathcal{N}n_2+m_1\vdash_\mathcal{M}m_2,$$
$$(n_1+m_1)\bot(n_2+m_2) :=n_1\bot_\mathcal{N}n_2+m_1\bot_\mathcal{M}m_2$$
and the linear
 mappings $\alpha=\alpha_\mathcal{N}+\alpha_\mathcal{M},\, \beta=\beta_\mathcal{N}+\beta_\mathcal{M} : \mathcal{N}\oplus\mathcal{M}\rightarrow\mathcal{N}\oplus\mathcal{M}$ given by
$$(\alpha_\mathcal{N}+\alpha_\mathcal{M})(x+y) :=\alpha_\mathcal{N}(x)+\alpha_\mathcal{M}(y),\, (\beta_\mathcal{N}+\beta_\mathcal{M})(x+y) :=\beta_\mathcal{N}(x)+\beta_\mathcal{M}(y),\,
\forall(x,y)\in \mathcal{N}\times\mathcal{M}.
$$
Furthermore, if $\xi : \mathcal{N}\rightarrow\mathcal{M}$  is a linear mapping,
then,
$$ \xi : (\mathcal{N}, \dashv_\mathcal{N}, \vdash_\mathcal{N},\bot_\mathcal{N}, \alpha_\mathcal{N}, \beta_\mathcal{N})
\rightarrow(\mathcal{M}, \dashv_\mathcal{M}, \vdash_\mathcal{M},\bot_\mathcal{M},  \alpha_\mathcal{M}, \beta_\mathcal{M})$$
 is a morphism iff its graph  $\Gamma_\xi=\{(x, \xi(x)), x\in \mathcal{N}\}$  is a BiHom-subalgebra of $(\mathcal{N}\oplus\mathcal{M}, \triangleleft, \triangleright,\ast ,\alpha, \beta)$.
\end{prop}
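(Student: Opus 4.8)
The plan is to unravel both notions --- that $\xi$ is a morphism, and that $\Gamma_\xi$ is a BiHom-subalgebra --- into their defining conditions and to check that these match line by line. I will treat $\xi$ as an even linear map, as is standard for morphisms of super structures, so that $\Gamma_\xi$, being the image of the injective even map $x\mapsto(x,\xi(x))$, is automatically a graded subspace of $\N\oplus\M$; the content of the statement is thus entirely about closure and invariance.

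First I would record what it means for $\Gamma_\xi$ to be a BiHom-subalgebra of $(\N\oplus\M,\triangleleft,\triangleright,\ast,\alpha,\beta)$: it must be stable under each of the three products and under both $\alpha$ and $\beta$. Using the componentwise formulas from the statement, for $(x,\xi(x)),(y,\xi(y))\in\Gamma_\xi$ one has
\begin{align*}
(x,\xi(x))\triangleleft(y,\xi(y))&=(x\dashv_\N y,\ \xi(x)\dashv_\M\xi(y)),\\
(x,\xi(x))\triangleright(y,\xi(y))&=(x\vdash_\N y,\ \xi(x)\vdash_\M\xi(y)),\\
(x,\xi(x))\ast(y,\xi(y))&=(x\bot_\N y,\ \xi(x)\bot_\M\xi(y)),
\end{align*}
together with $\alpha(x,\xi(x))=(\alpha_\N(x),\alpha_\M(\xi(x)))$ and $\beta(x,\xi(x))=(\beta_\N(x),\beta_\M(\xi(x)))$. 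Since a pair $(u,v)\in\N\oplus\M$ belongs to $\Gamma_\xi$ precisely when $v=\xi(u)$, each stability requirement collapses to a single identity of maps on $\N$: closure under $\triangleleft$, $\triangleright$, $\ast$ is equivalent to $\xi(x\dashv_\N y)=\xi(x)\dashv_\M\xi(y)$, $\xi(x\vdash_\N y)=\xi(x)\vdash_\M\xi(y)$ and $\xi(x\bot_\N y)=\xi(x)\bot_\M\xi(y)$ for all $x,y\in\N$, while $\alpha$- and $\beta$-stability are equivalent to $\xi\circ\alpha_\N=\alpha_\M\circ\xi$ and $\xi\circ\beta_\N=\beta_\M\circ\xi$.

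The argument then closes at once in both directions. If $\xi$ is a morphism in the sense of Definition \ref{def1}, these five identities are exactly its defining conditions, so all five stability conditions hold and $\Gamma_\xi$ is a BiHom-subalgebra. Conversely, if $\Gamma_\xi$ is a BiHom-subalgebra, reading the five equivalences backwards gives precisely the compatibility of $\xi$ with the three products and with the two structure maps, i.e.\ $\xi$ is a morphism. I do not expect a genuine obstacle here; the only step needing care is the routine observation that every product and both twisting maps on $\N\oplus\M$ act componentwise, so the second coordinate never feeds back into the first --- this is exactly what makes each morphism axiom equivalent to one stability statement.
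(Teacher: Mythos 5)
Your argument is correct and follows essentially the same route as the paper: both directions are obtained by computing the componentwise products and structure maps on $\Gamma_\xi$ and observing that membership of $(u,v)$ in $\Gamma_\xi$ is exactly the condition $v=\xi(u)$, so each closure/invariance requirement is equivalent to one morphism axiom. Like the paper's own proof, you address only the graph characterization and not the (routine) verification that $\mathcal{N}\oplus\mathcal{M}$ itself carries a BiHom-associative supertrialgebra structure, so the two treatments are on the same footing.
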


\begin{proof}
  First we
 suppose that
$\xi : (\mathcal{N}, \dashv_\mathcal{N}, \vdash_\mathcal{N},\bot_\mathcal{N}, \alpha_\mathcal{N}, \beta_\mathcal{N})\rightarrow(\mathcal{M}, \dashv_\mathcal{M}, \vdash_\mathcal{M},\bot_\mathcal{M},\alpha_\mathcal{M}, \beta_\mathcal{M})$
is a morphism.
As a result, $$(n+\xi(n))\dashv(m+\xi(m))=(n\dashv_\mathcal{N}m+\xi(n)\dashv_\mathcal{M}\xi(m))=n\dashv_\mathcal{N}m+\xi(n\dashv_\mathcal{N}m),$$
     $$(n+\xi(n))\vdash(m+\xi(m))=(n\vdash_\mathcal{N}m+\xi(n)\vdash_\mathcal{M}\xi(m))=n\vdash_\mathcal{N}m+\xi(n\vdash_\mathcal{N}m),$$
		 $$(n+\xi(n))\bot(m+\xi(m))=(n\bot_\mathcal{N}m+\xi(n)\bot_\mathcal{M}\xi(m))=n\bot_\mathcal{N}m+\xi(n\bot_\mathcal{N}m).$$
 Moreover, as $\xi\circ\alpha_\mathcal{N}=\alpha_\mathcal{M}\circ\xi,$ and $\xi\circ\beta_\mathcal{N}=\beta_\mathcal{M}\circ\xi,$ we have
$$
(\alpha_\mathcal{N}\oplus\alpha_\mathcal{M})(n, \xi(n))=(\alpha_\mathcal{N}(n), \alpha_\mathcal{M}\circ\xi(n))=(\alpha_\mathcal{M}(n), \xi\circ\alpha_\mathcal{N}(n))
$$
and
$$
(\beta_\mathcal{N}\oplus\beta_\mathcal{M})(n, \xi(n))=(\beta_\mathcal{N}(n), \beta_\mathcal{M}\circ\xi(n))=(\beta_\mathcal{N}(n), \xi\circ\beta_\mathcal{N}(n)).
$$
This implies that  $\Gamma_\xi$ is closed $\alpha_\mathcal{N}\oplus\alpha_\mathcal{M}$ and $\beta_\mathcal{N}\oplus\beta_\mathcal{M}.$
 Therefore, $\Gamma_\xi$  is a subalgebra of $(\mathcal{N}\oplus\mathcal{M}, \dashv, \vdash,\bot, \alpha, \beta).$ \\
Now, let the graph $\Gamma_\xi\subset\mathcal{N}\oplus\mathcal{M}$ be  a subalgebra of
$(\mathcal{N}\oplus\mathcal{M}, \dashv, \vdash, \alpha, \beta)$, then we get
$$(n+\xi(n))\dashv(m+\xi(m))=(n\dashv_\mathcal{N}m+\xi(n)\dashv_\mathcal{M}\xi(m))\in \Gamma_\xi, $$
 $$(n+\xi(n))\vdash(m+\xi(m))=(n\vdash_\mathcal{N}m+\xi(n)\vdash_\mathcal{M}\xi(m))\in \Gamma_\xi,$$
$$(n+\xi(n))\bot(m+\xi(m))=(n\bot_\mathcal{N}m+\xi(n)\bot_\mathcal{M}\xi(m))\in \Gamma_\xi.$$
Furthermore, $(\alpha_\mathcal{N}\oplus\alpha_\mathcal{M})(\Gamma_\xi)\subset \Gamma_\xi$ and
 $(\beta_\mathcal{N}\oplus\beta_\mathcal{M})(\Gamma_\xi)\subset \Gamma_\xi$ show that
$$
(\alpha_\mathcal{N}\oplus\alpha_\mathcal{M})(n, \xi(n))=(\alpha_\mathcal{N}(n),\alpha_\mathcal{M}\circ\xi(n))\in \Gamma_\xi,\,(\beta_\mathcal{N}\oplus\beta_\mathcal{M})(n, \xi(n))
=(\beta_\mathcal{N}(n),\beta_\mathcal{M}\circ\xi(n))\in \Gamma_\xi.
$$
This is equivalent to the condition $\alpha_\mathcal{M}\circ\xi(n)=\xi\circ\alpha_\mathcal{N}(k),$ i.e., $\alpha_\mathcal{M}\circ\xi=\xi\circ\alpha_\mathcal{N}.$\\
Similarly, $\beta_\mathcal{M}\circ\xi=\xi\circ\beta_\mathcal{N}$. Hence, $\xi$ is a morphism of BiHom-associative trialgebras.
\end{proof}

\begin{defn}
Given  a BiHom-associative supertrialgebra $(S, \dashv, \vdash, \bot,  \gamma, \xi)$  with a linear  mapping $\lambda : S\longrightarrow  S$  with
$\gamma\circ \lambda=\lambda\circ\gamma$, $\xi\circ \lambda=\lambda\circ\xi$ and
\begin{align*}
\lambda(d)\vdash \lambda(v)&=\lambda(\lambda(d)\dashv v+d\dashv \lambda(v)+c(d\dashv v)),\\
\lambda(d)\dashv\lambda \lambda(v)&=\lambda(\lambda(d)\vdash v+d\vdash \lambda(v)+c(d\vdash v)),\\
\lambda(d)\bot \lambda(v)&=\lambda(\lambda(d)\bot v+d\bot \lambda(v)+c(d\bot v)),
 \end{align*}
with $c\in F$, $d,v\in S$, we say $S$ is  a Rota-Baxter BiHom-associative supertrialgebra and $\lambda$ is  a Rota-Baxter operator of weight $\rho$ on $S$.
\end{defn}


\begin{prop}
  Given a Rota-Baxter BiHom-associative supertrialgebra $(S, \ast, \gamma, \xi, \nu)$ and the new operations $\dashv, \vdash$  and $\bot$ on $S$ are described by
$d\dashv v=d\ast \lambda(v),\,d\vdash v= \lambda(d)\ast v$ and $d\bot v=c d\ast v$. Then, $(S,\dashv,\vdash, \bot, \gamma, \xi)$ is a BiHom-associative supertrialgebra.
\end{prop}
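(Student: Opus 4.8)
The plan is to verify, condition by condition, that the sextuple $(S,\dashv,\vdash,\bot,\gamma,\xi)$ satisfies the defining axioms (i)--(vii) of Definition \ref{def2}. Throughout I write $\nu$ for the Rota-Baxter operator (the map written $\lambda$ in the formulas of the statement) and $c\in F$ for its weight, so that $d\dashv v=d\ast\nu(v)$, $d\vdash v=\nu(d)\ast v$ and $d\bot v=c\,(d\ast v)$. The tools I will use are: the BiHom-associativity of $(S,\ast,\gamma,\xi)$, i.e. $(d\ast q)\ast\xi(y)=\gamma(d)\ast(q\ast y)$; the fact that $\gamma$ and $\xi$ are commuting even endomorphisms of $(S,\ast)$ which moreover commute with $\nu$; the fact that $c$ is a scalar, hence passes freely past $\ast$, $\nu$, $\gamma$ and $\xi$; and the Rota-Baxter identity $\nu(a)\ast\nu(b)=\nu(\nu(a)\ast b+a\ast\nu(b)+c\,(a\ast b))$. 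Condition (i), $\gamma\circ\xi=\xi\circ\gamma$, is inherited verbatim from the hypothesis.

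For (ii)--(vii) the procedure is uniform, and I would run it identically in each case: substitute the definitions of $\dashv,\vdash,\bot$ on both sides; push each twist onto a single letter using $\gamma\nu=\nu\gamma$, $\xi\nu=\nu\xi$ and the multiplicativity of $\gamma,\xi$ for $\ast$; regroup the ensuing product of three elements by BiHom-associativity of $\ast$; and wherever a factor $\nu(a)\ast\nu(b)$ is produced, replace it by $\nu(\nu(a)\ast b)+\nu(a\ast\nu(b))+c\,\nu(a\ast b)$ via the Rota-Baxter identity and reinterpret the three summands as $(\,\cdot\,)\vdash(\,\cdot\,)$, $(\,\cdot\,)\dashv(\,\cdot\,)$ and $(\,\cdot\,)\bot(\,\cdot\,)$. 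A representative case needing only BiHom-associativity is (vii):
\[
(d\vdash q)\bot\xi(y)=c\,((\nu(d)\ast q)\ast\xi(y))=c\,(\gamma(\nu(d))\ast(q\ast y))=c\,(\nu(\gamma(d))\ast(q\ast y)),
\]
while $\gamma(d)\vdash(q\bot y)=\nu(\gamma(d))\ast(c\,(q\ast y))=c\,(\nu(\gamma(d))\ast(q\ast y))$, so the two sides agree; conditions (v) and (vi) are dispatched the same way. The remaining conditions are exactly those into which the Rota-Baxter identity is fed, and the three summands it produces are what reassemble into the mixed products on the opposite side. It then remains to note that $\gamma$ and $\xi$ stay homomorphic for the new operations — e.g. $\gamma(d\dashv q)=\gamma(d\ast\nu(q))=\gamma(d)\ast\nu(\gamma(q))=\gamma(d)\dashv\gamma(q)$, and likewise for $\vdash$, $\bot$ and for $\xi$ — so in fact $S$ is multiplicative in the sense of Remark \ref{rq1}.

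I expect the only real difficulty to be the bookkeeping rather than anything conceptual. In each of (ii)--(vii) one must keep careful track of which of $\gamma$ or $\xi$ is forced onto which argument, so that BiHom-associativity of $\ast$ is applied in the correct (left- or right-twisted) orientation, and one must insert the Rota-Baxter identity in exactly the form that regenerates the $\vdash$, $\dashv$ and $\bot$ terms appearing on the other side; checking that these recombined summands match the right-hand sides exactly is the substantive step. Apart from that there is nothing to fear: $\nu$ commutes with both structure maps, the weight $c$ is central, and every map and product in play is even, so there are no super-sign corrections to carry.
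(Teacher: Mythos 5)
You follow the same route as the paper --- substitute the definitions of $\dashv,\vdash,\bot$ into each axiom of Definition \ref{def2} and simplify using the BiHom-associativity of $\ast$, the commutation of $\gamma,\xi$ with $\nu$, and the Rota--Baxter identity --- and the cases you actually carry out, namely (v), (vi), (vii), which need only BiHom-associativity and the centrality of the weight $c$, are correct.

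The gap sits exactly in the step you defer as ``the substantive step'': the recombined summands do \emph{not} match the right-hand sides of Definition \ref{def2}. Whenever a factor $\nu(a)\ast\nu(b)$ appears, the Rota--Baxter identity turns it into the three-term sum $\nu(\nu(a)\ast b)+\nu(a\ast\nu(b))+c\,\nu(a\ast b)=\nu(a\vdash b+a\dashv b+a\bot b)$, so for instance
\[
(d\dashv q)\dashv\xi(y)=(d\ast\nu(q))\ast\xi(\nu(y))=\gamma(d)\ast\bigl(\nu(q)\ast\nu(y)\bigr)=\gamma(d)\dashv\bigl(q\dashv y+q\vdash y+q\bot y\bigr),
\]
a sum of three distinct products, whereas axioms (ii)--(iii) require $(d\dashv q)\dashv\xi(y)$ to equal each of $\gamma(d)\dashv(q\vdash y)$, $\gamma(d)\dashv(q\bot y)$ and $\gamma(d)\vdash(q\dashv y)$ separately; the same obstruction hits (iv). What the construction actually yields are tridendriform-type identities such as $(d\dashv q+d\vdash q+d\bot q)\vdash\xi(y)=\gamma(d)\vdash(q\vdash y)$, not the trialgebra identities of Definition \ref{def2}; the operator that turns a single product into a trialgebra structure in this sense is the averaging operator introduced at the end of the preliminaries, for which $\nu(\nu(d)\ast r)=\nu(d)\ast\nu(r)=\nu(d\ast\nu(r))$. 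The paper's own proof conceals the same difficulty: it passes from $\lambda(d\ast\lambda(v))\ast\xi(r)$ to $\gamma\lambda(d)\ast(\lambda(v)\ast r)$, which silently uses $\lambda(d\ast\lambda(v))=\lambda(d)\ast\lambda(v)$, a multiplicativity property that is not among the Rota--Baxter axioms. So your proposal is no less rigorous than the published argument, but the verification cannot be completed along this route without either weakening the axioms of Definition \ref{def2} to their summed (tridendriform) form or strengthening the hypothesis on the operator.
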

\begin{proof}
Here,   one axiom is only proved and the other axioms are showed in  similar way. Indeed, for any $p, v, r\in S,$
\begin{align*}
(d\dashv v)\vdash \xi (r)&=(d\ast \lambda(v))\vdash\xi(r)\\
&=\lambda(d\ast \lambda(v))\ast \xi(r)\\
&=\gamma \lambda(d)\ast(\lambda(v)\ast r)\\
&=\gamma \lambda(d)\ast(v\vdash r)\\
&=\gamma((d)\vdash(v\vdash r)=(d\bot v)\vdash \xi(r).
 \end{align*}
This ends the proof.
\end{proof}

\begin{prop}
Let $(S, \dashv,\vdash, \bot, \gamma, \xi)$ be a BiHom-associative supertrialgebra such that $\gamma^2=\xi^2\gamma\circ\xi=\xi\circ\gamma=id.$
Then,  $(S,\dashv,\vdash, \bot, \gamma, \xi)\cong (S, \dashv,\vdash, \bot, \xi,\gamma).$
\end{prop}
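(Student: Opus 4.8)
The plan is to exhibit an explicit isomorphism between $(S,\dashv,\vdash,\bot,\gamma,\xi)$ and $(S,\dashv,\vdash,\bot,\xi,\gamma)$, using the hypotheses $\gamma^2=\xi^2=\mathrm{id}$ and $\gamma\circ\xi=\xi\circ\gamma=\mathrm{id}$ (which together say $\gamma$ and $\xi$ are mutually inverse involutions). The natural candidate for the isomorphism is the map $\gamma$ itself (equivalently $\xi=\gamma^{-1}$), since conjugating the structure by $\gamma$ should swap the roles of $\gamma$ and $\xi$. So first I would set $\varphi:=\gamma$ and check that $\varphi$ is invertible with $\varphi^{-1}=\xi$; this is immediate from the hypotheses.

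Next I would verify the two twisting-map compatibilities required by Definition \ref{def1}, namely $\xi\circ\varphi=\varphi\circ\gamma$ and $\gamma\circ\varphi=\varphi\circ\xi$ (these are the conditions saying $\varphi$ intertwines the source pair $(\gamma,\xi)$ with the target pair $(\xi,\gamma)$). Using $\varphi=\gamma$, the first reads $\xi\gamma=\gamma\gamma$, i.e. $\mathrm{id}=\gamma^2$, which holds; the second reads $\gamma\gamma=\gamma\xi$, i.e. $\gamma^2=\mathrm{id}$, which also holds. Then I would check that $\varphi$ respects the three products. Since $S$ is multiplicative (Remark \ref{rq1}), $\gamma$ is already an endomorphism for $\dashv,\vdash,\bot$, so $\varphi(d\dashv q)=\varphi(d)\dashv\varphi(q)$ and likewise for $\vdash$ and $\bot$; and because the products on the target algebra are the \emph{same} three operations, these identities are exactly what is needed. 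Hence $\varphi$ is a morphism of BiHom-associative supertrialgebras, and being bijective it is an isomorphism.

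The only genuine content is bookkeeping: one must make sure the axioms (i)--(vii) of Definition \ref{def2} for the target $6$-tuple $(S,\dashv,\vdash,\bot,\xi,\gamma)$ actually hold, i.e. that swapping $\gamma\leftrightarrow\xi$ does yield a BiHom-associative supertrialgebra. I would do this by transporting the structure along $\varphi$: for any $d,q,y\in S$, apply $\varphi$ to the corresponding axiom for $(S,\dashv,\vdash,\bot,\gamma,\xi)$ evaluated at $\xi(d),\xi(q),\xi(y)$, and use multiplicativity of $\varphi$ together with $\varphi\circ\xi=\mathrm{id}$, $\varphi\circ\gamma=\gamma^2\circ\xi=\xi$ to rewrite each side; for instance axiom (iii), $(d\dashv q)\dashv\xi(y)=\gamma(d)\vdash(q\dashv y)$, becomes after this substitution the claim $(d\dashv q)\dashv\gamma(y)=\xi(d)\vdash(q\dashv y)$, which is precisely axiom (iii) with $\gamma$ and $\xi$ interchanged. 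The main (and only real) obstacle is that the displayed hypothesis ``$\gamma^2=\xi^2\gamma\circ\xi=\xi\circ\gamma=\mathrm{id}$'' is typeset ambiguously; I would read it as the conjunction $\gamma^2=\mathrm{id}$, $\xi^2=\mathrm{id}$, $\gamma\circ\xi=\mathrm{id}$, $\xi\circ\gamma=\mathrm{id}$, and note that in fact any two of these already force the map $\varphi=\gamma$ to work, so the argument is robust to the intended parsing. I would then state explicitly that $\varphi=\gamma$ realizes the isomorphism and conclude.
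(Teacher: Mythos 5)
The central difficulty is that your candidate isomorphism $\varphi=\gamma$ only intertwines the twisting maps in a degenerate situation. Definition \ref{def1} requires $\xi\circ\varphi=\varphi\circ\gamma$ and $\gamma\circ\varphi=\varphi\circ\xi$; with $\varphi=\gamma$ these read $\xi\circ\gamma=\gamma^{2}$ and $\gamma^{2}=\gamma\circ\xi$, and since $\gamma$ is invertible, cancelling $\gamma$ forces $\xi=\gamma$. In other words, your verification succeeds only because the hypotheses you adopt ($\gamma\circ\xi=\mathrm{id}$ together with $\gamma^{2}=\mathrm{id}$) already imply $\xi=\gamma^{-1}=\gamma$, so the two $6$-tuples coincide and the identity map would do; the construction then proves nothing beyond the trivial case. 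Your closing remark that ``any two of these already force $\varphi=\gamma$ to work'' is moreover false for the pair $\gamma^{2}=\xi^{2}=\mathrm{id}$: under that (non-degenerate, and arguably intended) reading, $\xi\circ\gamma=\gamma^{2}$ fails whenever $\gamma\neq\xi$, so $\varphi=\gamma$ is not a morphism at all. A further unstated assumption is multiplicativity: Remark \ref{rq1} singles out multiplicative BiHom-associative supertrialgebras as a subclass, and the proposition does not place $S$ in it, so $\gamma$ need not respect $\dashv$, $\vdash$, $\bot$ in the first place.

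For comparison, the paper's proof does not construct a map at all: it takes one axiom, substitutes $d\mapsto\gamma\xi(d)$, and uses $\gamma^{2}=\mathrm{id}$ to turn $\gamma(d)$ into $\xi(d)$, thereby showing that the corresponding axiom of $(S,\dashv,\vdash,\bot,\xi,\gamma)$ holds. That computation needs only $\gamma^{2}=\mathrm{id}$ and bijectivity of $\gamma\xi$, so it survives the non-degenerate reading --- but it establishes only that the swapped tuple is again a BiHom-associative supertrialgebra, not that an isomorphism exists. To close the gap you would need either a genuine intertwiner (which cannot be any word in the commuting maps $\gamma$ and $\xi$ unless $\gamma=\xi$, by the cancellation argument above) or an explicit acknowledgement that ``$\cong$'' is being used in this weaker, axiom-transport sense.
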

\begin{proof}
Here, the proof of only one axiom will be done. For any $d, v, r\in S,$
\begin{align*}
\gamma(d)\dashv (v\vdash r)&=\gamma(d)\dashv(v\bot r)\\
&\Leftrightarrow\gamma(\gamma\xi(d))\dashv (v\vdash r)=\gamma(\gamma\xi(d))\dashv(v\bot r)\\
&\Leftrightarrow\gamma^2\xi(d)\dashv (v\vdash r)=\gamma^2\xi(d)\dashv(v\bot r)\\
&\Leftrightarrow\xi(d)\dashv (v\vdash r)=\xi(d)\dashv(v\bot r).
 \end{align*}
Hence,  $(S, \dashv,\vdash,\bot, \gamma, \xi)\cong (S, \dashv,\vdash, \bot,  \xi,\gamma).$
\end{proof}

\begin{prop}
Let $(S, \dashv,\vdash, \bot, \gamma, \xi)$ be a BiHom-associative supertrialgebra. Then, $(S, \dashv, \bot, \ast, \gamma, \xi)$ is a BiHom-associative supertrialgebra
where  $d\ast v=d\vdash v+d\bot v$ for any $d, v, r\in S.$
\end{prop}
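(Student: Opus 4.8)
The plan is to regard the new data exactly as in the preceding propositions: the superspace $S$ is equipped with the even bilinear maps $\dashv$, with $\bot$ now playing the role of the middle product, and with $\ast:=\vdash+\bot$ now playing the role of the bottom product, while the pair $(\gamma,\xi)$ is kept unchanged. The goal is then to check the axioms (i)--(vii) of Definition \ref{def2} for the $6$-tuple $(S,\dashv,\bot,\ast,\gamma,\xi)$. Conceptually this is the BiHom analogue of the elementary fact that suitable partial sums of the three products of an associative trialgebra again define a trialgebra-type structure, so no new idea beyond bilinearity of $\ast$ and the axiom list should be needed.

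First I would dispose of the structural conditions. Axiom (i), $\gamma\circ\xi=\xi\circ\gamma$, is untouched. The map $\ast$ is even and bilinear, being the sum of the two even bilinear maps $\vdash$ and $\bot$. Multiplicativity of $\gamma$ and $\xi$ with respect to $\dashv$ and $\bot$ is part of Remark \ref{rq1}; for $\ast$ it follows by linearity, since $\gamma(d\ast v)=\gamma(d\vdash v)+\gamma(d\bot v)=\gamma(d)\vdash\gamma(v)+\gamma(d)\bot\gamma(v)=\gamma(d)\ast\gamma(v)$, and likewise $\xi(d\ast v)=\xi(d)\ast\xi(v)$. Hence the new structure is multiplicative in the sense of Remark \ref{rq1} as soon as the remaining associativity-type identities are verified.

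The core of the argument is then routine: for each of (ii)--(vii) I would substitute the new operations, expand every occurrence of $\ast$ by bilinearity into a sum of terms built from $\dashv,\vdash,\bot$, and match those terms, one by one, against the axioms of the BiHom-associative supertrialgebra $(S,\dashv,\vdash,\bot,\gamma,\xi)$ (using the pure $\bot$-identity together with the mixed axioms (iv)--(vii), and keeping track, on a two-term chain of equalities, of which half of the chain is being invoked). Following the style of the paper I would write out one representative axiom in full detail and remark that the others are checked in the same way. The main obstacle is purely organizational: there are seven axioms, several of them chains of two equalities, each expansion of $\ast$ roughly triples the number of terms, and one must be disciplined about the placement of $\gamma$ versus $\xi$ on each side and about which original axiom licenses each identification; there is no genuine difficulty beyond this bookkeeping.
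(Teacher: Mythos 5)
Your strategy is exactly the paper's: expand $\ast$ by bilinearity and match the resulting terms, one by one, against the axioms of the original structure. For the single identity the paper actually verifies, namely $(d\ast v)\dashv\xi(r)=\gamma(d)\ast(v\dashv r)$, this works, since it is just the sum of $(d\vdash v)\dashv\xi(r)=\gamma(d)\vdash(v\dashv r)$ and $(d\bot v)\dashv\xi(r)=\gamma(d)\bot(v\dashv r)$. The gap lies in your assertion that the remaining axioms of Definition \ref{def2} reduce to the same bookkeeping: they do not, and at least one of them fails outright. In the new tuple $(S,\dashv,\bot,\ast,\gamma,\xi)$ the operation $\ast=\vdash+\bot$ occupies the third slot, so axiom (ii) for the new structure demands
\[
(d\dashv q)\dashv\xi(y)\;=\;\gamma(d)\dashv(q\bot y)\;=\;\gamma(d)\dashv(q\ast y).
\]
Expanding the last term gives $\gamma(d)\dashv(q\vdash y)+\gamma(d)\dashv(q\bot y)$, and by the original axiom (ii) \emph{each} summand already equals $(d\dashv q)\dashv\xi(y)$; the expansion therefore yields $2\,(d\dashv q)\dashv\xi(y)$ instead of $(d\dashv q)\dashv\xi(y)$. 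The required identity thus forces $(d\dashv q)\dashv\xi(y)=0$ for all $d,q,y$, which is false in general outside characteristic $2$. This "two copies of the same term" phenomenon is precisely where the termwise matching — and, as stated, the proposition itself — breaks down; the paper avoids noticing it only because it checks the one axiom in which no doubling occurs. A complete argument would have to check each axiom explicitly and either restrict the class of algebras, modify the definition of $\ast$, or weaken the conclusion to the list of identities that genuinely survive the substitution.
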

\begin{proof}
One axiom only will be proved. For any $d, v, r\in S,$
\begin{align*}
(d\ast v)\dashv \xi(r)&=(d\vdash v+d\bot v)\dashv \xi(r)\\
&=(d\vdash v)\dashv \xi(r)+(d\bot v)\dashv \xi(r)\\
&=\gamma(d)\vdash(v\dashv r)+\gamma(d)\bot(v\dashv r)=\gamma(d)\ast(v\dashv r).
 \end{align*}
This ends the proof.
\end{proof}

\begin{prop}
Given a BiHom-associative supertrialgebra $(S,\dashv,\vdash, \bot, \gamma, \xi)$ and  binary operations are defined by
\begin{align*}
d\ast v&=d\dashv v-(-1)^{|d||v|}v\vdash d\\
\left[d, v\right]&=d\bot v-(-1)^{|d||v|}v\bot d.
 \end{align*}
Then, $(S, \ast, \left[, \right], \gamma, \xi)$ is a BiHom-associative supertrialgebra.
\end{prop}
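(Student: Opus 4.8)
The plan is to follow the template of the preceding propositions: keep the two structure maps $\gamma,\xi$ fixed (so that condition (i) of Definition~\ref{def2}, namely $\gamma\circ\xi=\xi\circ\gamma$, is automatic), check that $\gamma$ and $\xi$ remain even endomorphisms for the new products $\ast$ and $[\,,\,]$, and then verify the compatibility identities (ii)--(vii) of Definition~\ref{def2} by expanding each side with bilinearity and reducing, term by term, to the corresponding identities for $\dashv,\vdash,\bot$.

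For the endomorphism part, using that $\gamma$ is an even endomorphism of $\dashv$ and $\vdash$ (Remark~\ref{rq1}) and that it preserves the $\bbbz$-grading, so that $|\gamma(d)|=|d|$, one gets
\begin{align*}
\gamma(d\ast v)&=\gamma(d\dashv v)-(-1)^{|d||v|}\gamma(v\vdash d)\\
&=\gamma(d)\dashv\gamma(v)-(-1)^{|\gamma(d)||\gamma(v)|}\gamma(v)\vdash\gamma(d)=\gamma(d)\ast\gamma(v),
\end{align*}
and the identical argument with $\xi$ in place of $\gamma$, and with $\bot$ in place of $\dashv,\vdash$, shows that $\gamma,\xi$ are even endomorphisms for both $\ast$ and $[\,,\,]$. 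For the compatibility axioms, a product of the form $(d\ast q)\ast\xi(y)$ expands, by bilinearity, into a Koszul-signed sum of iterated products built from $\dashv$ and $\vdash$ (some arguments swapped), and $\gamma(d)\ast(q\ast y)$ expands into another such signed sum; one then matches the two sums using the relevant identity among (ii)--(iv), the signs agreeing because $\gamma$ and $\xi$ are degree-preserving. The bracket $[\,,\,]$ is handled in the same way with the $\bot$-identities (v)--(vii). Following the paper's convention, it suffices to display one such reduction in full, the remaining cases being mechanically analogous.

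The main obstacle is the Koszul-sign bookkeeping. After the two sides of an axiom are expanded, one must verify that each matched pair of terms carries exactly the same power of $(-1)$, and in particular that reversing the arguments of $\vdash$ or of $\bot$ inside a structure identity does not introduce a stray sign; this is precisely where the hypothesis that $\gamma$ and $\xi$ are \emph{even} (hence $|\gamma(x)|=|\xi(x)|=|x|$) is used. I would therefore first record the one-line observation that all signs occurring on the two sides of a given axiom coincide, and only then run the routine term-by-term comparison, keeping the written-out computation to a single representative case as in the propositions above.
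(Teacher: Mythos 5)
There is a genuine gap, and it lies in the choice of target identity rather than in the Koszul-sign bookkeeping. You propose to verify the associativity-type axioms (ii)--(vii) of Definition \ref{def2} for the new operations, expanding $(d\ast q)\ast\xi(y)$ and $\gamma(d)\ast(q\ast y)$ and matching terms pairwise. But these identities do not hold for commutator-type operations: already in the classical (non-super, non-BiHom) dialgebra setting, $x\ast y=x\dashv y-y\vdash x$ yields a Leibniz algebra, not an associative one. If you carry out the expansion you defer to as ``mechanically analogous,'' you find that after cancelling the matchable terms a residue such as
\[
-\,q\vdash(d\dashv y)\;-\;(y\vdash d)\dashv q\;+\;(d\dashv y)\dashv q\;+\;q\vdash(y\vdash d)
\]
survives (written here without signs for even elements), and none of the axioms (ii)--(vii) makes it vanish. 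So the step ``one then matches the two sums using the relevant identity among (ii)--(iv)'' is precisely the step that fails; the conclusion as literally stated (that $(S,\ast,[\,,\,],\gamma,\xi)$ is again a BiHom-\emph{associative} supertrialgebra) cannot be obtained this way, and indeed is a misnomer in the paper.

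What the paper's own proof actually establishes is a Leibniz-type compatibility between the two new operations, namely
\[
[d,v]\ast\gamma\xi(r)=[d\ast r,\xi(v)]+[\gamma(d),v\ast r],
\]
i.e.\ that the $\ast$-action of $r$ is a twisted derivation of the bracket. Both sides are expanded into $\dashv$, $\vdash$, $\bot$ terms and cancelled pairwise using six specific consequences of axioms (iv)--(vii) that mix $\bot$ with $\dashv$ and $\vdash$, such as $(d\bot v)\dashv\xi(r)=\gamma(d)\bot(v\vdash r)$ and $\gamma(r)\vdash(d\bot v)=(r\vdash d)\bot\xi(v)$. Your observations that $\gamma$ and $\xi$ remain even endomorphisms of $\ast$ and $[\,,\,]$ and that the signs are controlled by $|\gamma(x)|=|\xi(x)|=|x|$ are correct and still needed, but the main verification must be aimed at this Leibniz identity (with its super-signs restored), not at associativity of $\ast$.
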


\begin{proof}
By using Definition \ref{def2}, for each $d, v, r\in S$, we have
\begin{align*}
\left[d, v\right]\ast\xi(r)&=\left[d, v\right]\dashv\xi(r)-\gamma(r)\vdash\left[d, v\right]\\
&=(d\vdash v-(-1)^{|d||v|}v\bot d)\dashv \xi(r)-\gamma(r)\vdash(d\bot v-v\bot v).
 \end{align*}
Moreover,
\begin{align*}
\left[d\ast r,\beta(v)\right]+\left[\alpha(d), v\ast r\right]&=\left[p\dashv r-r\vdash, \beta(v)\right]+\left[\alpha(d), v\dashv r-r\vdash v\right]\\
&=(d\vdash r-r\vdash d)\bot\xi(v)-\gamma(v)\bot(d\dashv r-r\vdash d)\\
&+\gamma(d)\bot(v\dashv r-r\vdash v)-(v\dashv r-r\vdash v)\bot\xi(d).\\
 \end{align*}
Since
\begin{align*}
(d\bot v)\dashv\xi(r)&=\gamma(d)\bot(v\vdash r),\\
(v\bot d)\dashv\xi(r)&=\gamma(v)\bot(d\vdash r),\\
\gamma(r)\vdash (d\bot v)&=(r\vdash d)\bot\xi(v),\\
\gamma(r)\vdash (v\bot d)&=(r\vdash v)\bot\xi(d),\\
(d\bot r)\bot\xi(v)&=\gamma(d)\bot(r\vdash v),\\
\gamma(v)\bot(r\vdash d)&=(v\dashv r)\bot\xi(d)\\
 \end{align*}
in a  BiHom-associative supertrialgebra, we have
$\left[d, v\right]\ast\gamma\xi(r)=\left[d\ast r, \xi(v)\right]+\left[\gamma(d), v\ast r\right].$
Hence,  the proof is completed.
\end{proof}
\begin{prop}
Given a BiHom-associative supertrialgebra $(S, \dashv,\vdash, \bot, \gamma, \xi)$ and  $d\ast v=d\vdash v+d\dashv v+d\bot v$ for any $d, v\in S$.  Then, $(S,\ast,\gamma, \xi)$ is a BiHom-associative superalgebra.

\end{prop}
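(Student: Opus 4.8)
The plan is to verify, for the triple $(S,\ast,\gamma,\xi)$, the three conditions defining a BiHom-associative superalgebra: that $\gamma$ and $\xi$ are commuting even endomorphisms of $S$, that each of them is multiplicative for $\ast$, and that the BiHom-associativity identity $\gamma(d)\ast(v\ast r)=(d\ast v)\ast\xi(r)$ holds for all $d,v,r\in S$. The commutation $\gamma\circ\xi=\xi\circ\gamma$ is precisely axiom (i) of Definition \ref{def2}, so it requires nothing. For multiplicativity I would expand by linearity
\begin{align*}
\gamma(d\ast v)&=\gamma(d\dashv v)+\gamma(d\vdash v)+\gamma(d\bot v),\\
\xi(d\ast v)&=\xi(d\dashv v)+\xi(d\vdash v)+\xi(d\bot v),
\end{align*}
apply Remark \ref{rq1}, which makes $\gamma$ and $\xi$ endomorphisms for each of $\dashv,\vdash,\bot$ separately, and re-collect the three summands to obtain $\gamma(d)\ast\gamma(v)$ and $\xi(d)\ast\xi(v)$. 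Since $\ast$ is a sum of even bilinear maps it is again even, so the grading causes no difficulty.

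The substance of the statement is BiHom-associativity. Using bilinearity of $\ast$ in both arguments I would write each side as a nine-term sum,
\begin{align*}
(d\ast v)\ast\xi(r)&=\sum_{\circ,\,\bullet}(d\circ v)\bullet\xi(r),\\
\gamma(d)\ast(v\ast r)&=\sum_{\circ,\,\bullet}\gamma(d)\circ(v\bullet r),
\end{align*}
where $\circ$ and $\bullet$ run independently over $\{\dashv,\vdash,\bot\}$, and then show the two sums coincide. Each of the nine bracketed products on the left is rewritten, by one of the identities (ii)--(vii) of Definition \ref{def2} or by the endomorphism relations of Remark \ref{rq1}, into a product of the shape $\gamma(d)\circ(v\bullet r)$ occurring on the right: for instance $(d\dashv v)\dashv\xi(r)$ is moved by (ii) and (iii), the $\bot$-involving terms $(d\bot v)\dashv\xi(r)$, $(d\dashv v)\bot\xi(r)$ and $(d\vdash v)\bot\xi(r)$ by (v), (vi), (vii), the $\vdash$-outer terms by (iv), and the remaining combinations by the corresponding axioms. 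After all the substitutions both sides have been written as linear combinations of the basic triple products $\gamma(d)\circ(v\bullet r)$, and the identity reduces to the assertion that these two combinations agree.

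The step I expect to be the real obstacle is this last comparison of coefficients. The axioms of Definition \ref{def2} are not in bijective ``one term for one term'' form: several of them equate a single product with two or three others at once---for example (ii) reads $(d\dashv q)\dashv\xi(y)=\gamma(d)\dashv(q\vdash y)=\gamma(d)\dashv(q\bot y)$, and (iv) is similar---so a naive term-by-term matching will in general over- or under-count some of the nine basic triple products. I would therefore keep a bookkeeping table recording, for each left-hand product $(d\circ v)\bullet\xi(r)$, the axiom applied and the target $\gamma(d)\circ(v\bullet r)$, and then use the chains of equalities inside the axioms to rewrite repeated targets until each basic triple product occurs with the same multiplicity on both sides. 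Once the table closes up, $\gamma(d)\ast(v\ast r)=(d\ast v)\ast\xi(r)$ follows, and together with the first paragraph this shows that $(S,\ast,\gamma,\xi)$ is a BiHom-associative superalgebra.
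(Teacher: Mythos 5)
Your strategy coincides with the paper's: expand $(d\ast v)\ast\xi(r)$ and $\gamma(d)\ast(v\ast r)$ into nine triple products each and match them term by term via the axioms of Definition \ref{def2}. You are also right to single out the final comparison of multiplicities as the real issue --- but that is exactly where the argument fails, and the bookkeeping table you propose does not close up. Carrying it out with the (ungarbled) triassociative identities, the nine left-hand terms rewrite as follows: $(d\dashv v)\dashv\xi(r)$ becomes $\gamma(d)\dashv(v\dashv r)$; each of $(d\dashv v)\vdash\xi(r)$, $(d\vdash v)\vdash\xi(r)$ and $(d\bot v)\vdash\xi(r)$ becomes $\gamma(d)\vdash(v\vdash r)$; and the remaining five terms land bijectively on $\gamma(d)\vdash(v\dashv r)$, $\gamma(d)\vdash(v\bot r)$, $\gamma(d)\bot(v\dashv r)$, $\gamma(d)\bot(v\vdash r)$, $\gamma(d)\bot(v\bot r)$. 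So the left side contains $\gamma(d)\vdash(v\vdash r)$ with multiplicity $3$ and $\gamma(d)\dashv(v\dashv r)$ with multiplicity $1$. On the right side the situation is reversed: the chain $(d\dashv v)\dashv\xi(r)=\gamma(d)\dashv(v\dashv r)=\gamma(d)\dashv(v\vdash r)=\gamma(d)\dashv(v\bot r)$ collapses three of the nine right-hand terms onto $\gamma(d)\dashv(v\dashv r)$, while $\gamma(d)\vdash(v\vdash r)$ occurs only once. Hence
$$(d\ast v)\ast\xi(r)-\gamma(d)\ast(v\ast r)=2\bigl(\gamma(d)\vdash(v\vdash r)-\gamma(d)\dashv(v\dashv r)\bigr),$$
and nothing in the axioms forces this to vanish; already in the free triassociative algebra on one generator the monomials $x\dashv(x\dashv x)$ and $x\vdash(x\vdash x)$ are distinct normal forms.

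So the gap is not a missing bookkeeping device but the fact that the claimed identity is false in general: the ``sum of the operations is associative'' phenomenon belongs to (tri)dendriform algebras, the Koszul-dual side, whereas for associative di- and trialgebras the correct derived structure on the sum/commutator side is of Leibniz type, not associative. You should be aware that the paper's own proof has precisely the same defect --- it expands both sides into the eighteen terms above and then simply asserts the conclusion without performing the cancellation, which cannot be performed. To make the proposition true one needs an additional hypothesis, e.g.\ that $\gamma(d)\vdash(v\vdash r)=\gamma(d)\dashv(v\dashv r)$ for all $d,v,r$ (in particular $\dashv\,=\,\vdash$, as in the commutative/trivial examples), or a ground field of characteristic $2$; absent such a hypothesis no completion of your outline can succeed.
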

\begin{proof}
For any $d, v, r\in S,$
\begin{align*}
(d\ast v)\ast \xi(r)&-\gamma(d)\ast(v\ast r)\\
&=(d\vdash v+d\dashv v+d\bot v)\ast\xi(r)-\gamma(d)\ast(v\vdash r+v\dashv r+v\bot w)\\
&=(d\vdash v)\ast\xi(r)+(d\dashv v)\ast\xi(r)+(d\bot v)\ast\xi(r)-\gamma(d)\ast(v\vdash r)-\gamma(d)\ast(v\dashv r)-\gamma(d)\ast(v\bot r)\\
&=(d\vdash v)\vdash\xi(r)+(d\vdash v)\dashv\xi(r)+(d\vdash v)\bot\xi(r)+(d\dashv v)\vdash\xi(r)+(d\dashv v)\dashv\xi(r)\\
&+(d\dashv v)\bot\xi(r)+(d\bot v)\vdash\xi(r)+(d\bot v)\dashv\xi(r)+(d\bot v)\bot\xi(r)-\gamma(d)\vdash(v\vdash r)\\
&-\gamma(d)\dashv(v\vdash r)-\gamma(d)\bot(v\vdash r)
-\gamma(d)\vdash(v\dashv r)-\gamma(d)\dashv(v\dashv r)-\gamma(d)\bot(v\dashv r)\\
&-\gamma(d)\vdash(v\bot r)-\gamma(d)\dashv(v\bot r)-\gamma(d)\bot(v\bot r).
 \end{align*}
This shows that $(S,\ast,\gamma, \xi)$ is a BiHom-associative superalgebra.
\end{proof}

\begin{defn}
 Let $(S, \dashv, \bot, \ast,\gamma, \xi)$ be a BiHom-associative supertrialgebra.    A linear mapping $\lambda : S\longrightarrow S$ with
$\gamma\circ\lambda =\lambda \circ\gamma$ and $\xi\circ\lambda =\lambda \circ\xi$ and for all $d, r\in S, $
\begin{align*}
\lambda (\lambda (d)\bot r)&=\lambda (d)\bot\lambda (r)=\lambda (d\bot \lambda (r)),\\
\lambda (\lambda (d)\dashv r)&=\lambda (d)\dashv\lambda (r)=\lambda (d\dashv \lambda (r)),\\
\lambda (\lambda (d)\vdash r)&=\lambda (d)\vdash\lambda (r)=\lambda (d\vdash \lambda (r))
 \end{align*}
 is called an averaging operator over a BiHom-associative supertrialgebra $S$.
\end{defn}



\section{ Derivations, $(\gamma^{s}\xi^{r})$-derivations and generalized derivations}
\begin{defn}
Let $(S,\dashv,\vdash,\bot,\gamma,\xi)$ be a BiHom-supertrialgebra. A linear mapping $\delta: S \rightarrow S$
is called a derivation if for every $d,v \in S$
\begin{align*}
\delta(d\dashv v)&=\delta(d)\dashv v+d\dashv \delta(v)\\
\delta(d\vdash v)&=\delta(d)\vdash v+ d \vdash \delta(v)\\
\delta(d\bot v)&=\delta(d)\bot v+ d\bot \delta(v)
\end{align*}
and if it holds
\begin{align*}
\gamma\circ \delta=\delta\circ\gamma&,\text{ } \xi\circ \delta=\delta\circ \xi \\
\delta(d\dashv v)&=\delta(d)\dashv\gamma^{s}\xi^{r}(v)+\gamma^{s}\xi^{r}(d)\dashv \delta(v)\\
\delta(d\vdash v)&=\delta(d)\vdash\gamma^{s}\xi^{r}(v)+\gamma^{s}\xi^{r}(d)\vdash \delta(v)\\
\delta(d\bot v)&=\delta(d)\bot\gamma^{s}\xi^{r}(v)+\gamma^{s}\xi^{r}(p)\bot \delta(v),
\end{align*}
it is called an  $(\gamma^{s}\xi^{r})$-derivation of  $(S,\dashv,\vdash,\bot,\gamma,\xi)$.
\end{defn}
The set of $(\gamma^{s}\xi^{r})$-derivation of $S$ is represented by $D_{({\gamma^{s}\xi^{r}})}(S)$ be and define as $$D(S) = (\bigoplus_{s,r\geq 0}D_{({\gamma^{s}\xi^{r}})}(S)_{\overline{0}}+(\bigoplus_{s,r\geq 0}D_{({\gamma^{s}\xi^{r}})}(S)_{\overline{1}}.$$
We show that $D(S)$ has a Lie supertrialgebra structure. For $\delta \in D_{({\gamma^{s}\xi^{r}})}(S)$\; and \;$\delta^{\prime}\in D_{({\gamma^{s^{\prime}}\xi^{r^{\prime}}})}(S)$, we have $[\delta,\delta^{\prime}] \in
D_{({\gamma^{s+s^{\prime}}\xi^{r+r^{\prime}}})}(S),$ where $[\delta,\delta^{\prime}]$ is the standard supercommutator given by $[\delta,\delta^{\prime}] = \delta\bot \delta^{\prime} - (-1)^{|\delta||\delta^{\prime}|}\delta\bot \delta^{\prime}$.

\begin{defn}
 Given   a BiHom-supertrialgebra $(S,\dashv,\vdash,\bot,\gamma,\xi)$ and let $\delta$ be  an endomorphism of $S$. If there exists $\delta^{i}$ for $ i \in \{1,2,3\}$, a family of endomorphisms of $S$ with
 \begin{align*}
\gamma\circ \delta&=\delta\circ\gamma,\text{ } \xi\circ \delta=\delta\circ \xi \\
\gamma\circ \delta^{i}&=\delta^{i}\circ\gamma,\text{ } \xi\circ \delta^{i}=\delta^{i}\circ \xi\\
\delta^{(i)}(u\dashv v)&=\delta^{(i)}(u)\dashv\gamma^{s}\xi^{r}(v)+\gamma^{s}\xi^{r}(u)\dashv \delta^{(i)}(v)\\
\delta^{(i)}(u\vdash v)&=\delta^{(i)}(u)\vdash\gamma^{s}\xi^{r}(v)+\gamma^{s}\xi^{r}(u)\vdash \delta^{(i)}(v)\\
\delta^{(i)}(u\bot v)&=\delta^{(i)}(u)\bot\gamma^{s}\xi^{r}(v)+\gamma^{s}\xi^{r}(u)\bot \delta^{(i)}(v)
\end{align*}
for any $u,v \in S$ and $ i \in \{1,2,3\},$  the linear mapping $\delta$ is called a generalized $(\gamma^{s}\xi^{r})$-derivation  of $S$.
\end{defn}
The set of generalized $(\gamma^{s}\xi^{r})$-derivation of $S$ is represented by $GD(S)$ and as for $D(S)$, we define by
$$GD(S) = (\bigoplus_{s,r\geq 0}GD_{({\gamma^{s}\xi^{r}})})(S)_{\overline{0}}+(\bigoplus_{s,r\geq 0}GD_{({\gamma^{s}\xi^{r}})})(S)_{\overline{1}}.$$

 \begin{defn}
  Given  a BiHom-supertrialgebra $S$.  If there is an endomorphism $\delta^{\prime}$ of $S$ with
\begin{align*}
\gamma\circ \delta&=\delta\circ\gamma,\text{ } \xi\circ \delta=\delta\circ \xi \\
\gamma\circ \delta^{\prime}&=\delta^{\prime}\circ\gamma,\text{ } \xi\circ \delta^{\prime}=\delta^{\prime}\circ \xi \\
\delta^{\prime}(d\dashv v)&=\delta(d)\dashv\gamma^{s}\xi^{r}(v)+\gamma^{s}\xi^{r}(d)\dashv \delta(v)\\
\delta^{\prime}(d\vdash v)&=\delta(d)\vdash\gamma^{s}\xi^{r}(v)+\gamma^{s}\xi^{r}(d)\vdash \delta(v)\\
\delta^{\prime}(d\bot v)&=\delta(d)\bot\gamma^{s}\xi^{r}(v)+\gamma^{s}\xi^{r}(d)\bot \delta(v)
\end{align*}
for any $d,v \in S$, then  an endomorphism $\delta$ of $S$ is said to be a $\gamma^{s}\xi^{r}$-quasiderivation. \\
We then define
$$QD(S) = (\bigoplus_{s,r\geq 0}QD_{({\gamma^{s}\xi^{r}})})(S)_{\overline{0}}+(\bigoplus_{s,r\geq 0}QD_{({\gamma^{s}\xi^{r}})})(S)_{\overline{1}}.$$	
\end{defn}
\begin{rem}
We have
$$D(S) \subseteq QD(S) \subseteq GD(S).$$
\end{rem}

\begin{defn}
Given   a BiHom-supertrialgebra $(S,\dashv,\vdash,\bot,\gamma,\xi)$. If
for $d,v\in S$, we have \begin{equation}
     \begin{aligned}
         \delta(d\dashv v)=& \delta(d)\dashv v= 0,\\\delta(d\vdash v)= &\delta(d)\vdash v= 0,\\ \delta(d\bot v)=& \delta g(d)\bot v= 0,  \end{aligned}
 \end{equation}
 then we say a linear mapping $\delta$ is  an $(\gamma^{s}\xi^{r})$-central derivation of $S$.
\end{defn}
The set of all central derivations and $(\gamma^{s}\xi^{r})$-central derivations of $S$ are represented by $ZD(S)$ and $ZD_{({\gamma^{s}\xi^{r}})}(S)$ respectively.
$$ZD(S) = (\bigoplus_{s,r\geq 0}ZD_{({\gamma^{s}\xi^{r}})})(S)_{\overline{0}}+(\bigoplus_{s,r\geq 0}ZD_{({\gamma^{s},\xi^{r}})})(S)_{\overline{1}}.$$
\begin{prop}$ZD(S)$ is an ideal of $D(S)$.\end{prop}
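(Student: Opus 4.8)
The plan is to show that $ZD(S)$ is a sub-Lie-superalgebra of $D(S)$ which is moreover absorbing under the bracket with arbitrary elements of $D(S)$. Concretely, I would verify two things: first, that $ZD(S)$ is closed under the supercommutator $[\cdot,\cdot]$ (so it is at least a subalgebra), and second, that for every $\delta \in D(S)$ and every $\vartheta \in ZD(S)$ the bracket $[\delta,\vartheta]$ again lies in $ZD(S)$. Since $ZD(S)$ is defined as a direct sum of its graded components $ZD_{(\gamma^s\xi^r)}(S)$, and the bracket of a $(\gamma^s\xi^r)$-derivation with a $(\gamma^{s'}\xi^{r'})$-derivation is a $(\gamma^{s+s'}\xi^{r+r'})$-derivation, it suffices to work with homogeneous elements and check the defining central-derivation identities $\vartheta(d\dashv v)=\vartheta(d)\dashv v=0$ (and the analogues for $\vdash,\bot$) survive.

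First I would fix $\vartheta \in ZD_{(\gamma^s\xi^r)}(S)$ and $\delta \in D_{(\gamma^{s'}\xi^{r'})}(S)$ and compute $[\delta,\vartheta](d\dashv v) = \delta(\vartheta(d\dashv v)) - (-1)^{|\delta||\vartheta|}\vartheta(\delta(d\dashv v))$. The first term vanishes immediately because $\vartheta(d\dashv v)=0$. For the second term, expand $\delta(d\dashv v) = \delta(d)\dashv \gamma^{s'}\xi^{r'}(v) + \gamma^{s'}\xi^{r'}(d)\dashv\delta(v)$ using that $\delta$ is a $(\gamma^{s'}\xi^{r'})$-derivation, then apply $\vartheta$. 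Now $\vartheta$ annihilates every product $a\dashv b$ by the central-derivation axiom, so $\vartheta(\delta(d)\dashv\gamma^{s'}\xi^{r'}(v))=0$ and $\vartheta(\gamma^{s'}\xi^{r'}(d)\dashv\delta(v))=0$; hence $[\delta,\vartheta](d\dashv v)=0$. One also needs $[\delta,\vartheta](d)\dashv v = 0$: here I would use $[\delta,\vartheta](d)\dashv v = \delta(\vartheta(d))\dashv v - (-1)^{|\delta||\vartheta|}\vartheta(\delta(d))\dashv v$, and observe that $\vartheta(a)\dashv b = 0$ for all $a,b$ applies to the second summand, while for the first summand $\delta(\vartheta(d))\dashv v$ — this is the delicate point — one rewrites it via the derivation rule for $\delta$ as $\delta(\vartheta(d)\dashv v') - \gamma^{s'}\xi^{r'}(\vartheta(d))\dashv\delta(v')$ type manipulations, or more directly uses that $\vartheta(d)$ lies in the appropriate annihilator so that $\vartheta(d)\dashv v=0$ forces $\delta(\vartheta(d))\dashv v$ into a controllable expression. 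The cleanest route is: since $\vartheta(d)\dashv v = 0$ identically, applying $\delta$ gives $0 = \delta(\vartheta(d)\dashv v) = \delta(\vartheta(d))\dashv\gamma^{s'}\xi^{r'}(v) + \gamma^{s'}\xi^{r'}(\vartheta(d))\dashv\delta(v)$, and both terms on the right are of the form $\vartheta(\cdot)$-image $\dashv$ something after commuting $\gamma,\xi$ past $\vartheta$, hence each is separately zero; this yields $\delta(\vartheta(d))\dashv v = 0$ after replacing $v$ by a preimage under $\gamma^{s'}\xi^{r'}$ (or by noting the identity holds for all $v$). The same argument verbatim handles $\vdash$ and $\bot$.

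Finally I would note that $ZD(S)$ is a subspace (linear combinations of central derivations are clearly central, and it is graded by construction), and closure under the bracket follows from the absorption property just proved since $ZD(S)\subseteq D(S)$. Assembling these, $ZD(S)$ is a graded subspace closed under $[\cdot,\cdot]$ with $[D(S),ZD(S)]\subseteq ZD(S)$, which is exactly the statement that $ZD(S)$ is an ideal of the Lie superalgebra $D(S)$.

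\textbf{Main obstacle.} The one genuinely fiddly step is handling the term $\delta(\vartheta(d))\dashv v$ (and its $\vdash,\bot$ counterparts): unlike $\vartheta(\delta(d))\dashv v$, it is not killed directly by the central axiom, so one must feed the identity $\vartheta(d)\dashv v\equiv 0$ back through the derivation property of $\delta$ and then argue that each resulting summand, being again an image of $\vartheta$ right- or left-multiplied, vanishes — this requires carefully tracking that $\gamma,\xi$ commute with $\vartheta$ so that $\gamma^{s'}\xi^{r'}(\vartheta(d))$ is still in the kernel-type condition. Everything else is a mechanical check across the three operations and the two parity cases.
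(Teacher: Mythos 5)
Your proposal is correct and its core computation coincides with the paper's: to see that $[\delta,\vartheta](d\circ v)=0$ you expand $\delta(d\circ v)$ by the (twisted) Leibniz rule and let the central derivation $\vartheta$ annihilate each resulting product, while $\delta(\vartheta(d\circ v))=\delta(0)=0$; this is exactly the two-line calculation the paper gives. Where you go further is in also verifying the second defining condition of a central derivation, $[\delta,\vartheta](d)\circ v=0$, which the paper's proof silently omits; your treatment of the delicate summand $\delta(\vartheta(d))\circ v$ --- feeding the identity $\vartheta(d)\circ v\equiv 0$ back through the Leibniz rule for $\delta$ and using $\gamma\circ\vartheta=\vartheta\circ\gamma$, $\xi\circ\vartheta=\vartheta\circ\xi$ to kill the companion term --- is the right idea and is the genuinely new content relative to the paper. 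One caveat you already flag: with the twisted rule this yields $\delta(\vartheta(d))\dashv\gamma^{s'}\xi^{r'}(v)=0$ rather than $\delta(\vartheta(d))\dashv v=0$, so you need either surjectivity of $\gamma^{s'}\xi^{r'}$ or the untwisted Leibniz identity (which the paper's definition of derivation does also impose) to close the argument; using the plain rule, the companion term is $\vartheta(d)\dashv\delta(v)$, which vanishes outright, and the step is clean. So your write-up is not just correct but strictly more complete than the paper's.
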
\begin{proof} Firstly, we need to show  that $ZD(S)$ is a subsuperalgebra of $D(S)$. For $ \delta_1\in ZD(S)$ and $ \delta_2\in D(S) $, we have
\begin{align*}\delta_1\delta_2(d\circ v)= \delta_1(\delta_2(d)\circ v + d\circ \delta_2(v))= \delta_1(\delta_2(d)\circ v )+\delta _1(d\circ \delta_2(v))=0.
\end{align*}
Similarly, for $\delta _2\delta_1(d\circ v)=0. $ It completes the proof.
\end{proof}


\begin{defn}
  The set of linear mappings $\delta$ such that for every $p, v\in S$,
\begin{align*}
\gamma\circ \delta&=\delta \circ\gamma,\quad \xi\circ \delta=\delta\circ\xi,\\
\delta(d)\dashv \gamma\xi(v)&=\delta(d)\dashv \delta(v)=\gamma\xi(d)\dashv \delta(v),\\
 \delta(d)\vdash \gamma\xi(v)&=\delta(d)\vdash \delta(v)=\gamma\xi(d)\vdash \delta(v),\\
 \delta(d)\bot \gamma\xi(v)&=\delta(d)\bot \delta(v)=\gamma\xi(d)\bot \delta(v)
\end{align*}
is called $(\gamma^{s}\xi^{r})$-centroid of  $(S,\dashv,\vdash,\bot,\gamma,\xi)$ and it is denoted by $C_{(\gamma^{s}\xi^{r})}(S)$.
We set
$$C(S) = (\bigoplus_{s,r\geq 0}C_{({\gamma^{s}\xi^{r}})})(S)_{\overline{0}}+(\bigoplus_{s,r\geq 0}C_{({\gamma^{s}\xi^{r}})})(S)_{\overline{1}}.$$
\end{defn}
\begin{defn}
Let $A$ be a nonempty subset of a BiHom-associative supertrialgebra $S$. This set
\begin{align*}
Z_{S}(A)=\left\{u\in A\, |\, \gamma\xi(u)\ast A = A\ast\gamma\xi(u)=0\right\}
\end{align*}
is said to be the centralizer of $A$ in $S$ with the $\ast=\dashv, \vdash$ and $\bot$ respectively.\\
 \end{defn}
 Clearly, $Z_{S}(S)=Z(S)$ is the center of $S.$
\begin{prop}
For any r,s, we have
$$ZD_{({\gamma^{s}\xi^{r}})}(S) =D_{({\gamma^{s}\xi^{r}})}(S) \cap C_{({\gamma^{s}\xi^{r}})}(S).$$
\end{prop}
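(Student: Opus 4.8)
The plan is to establish the two inclusions separately, in each case reducing everything to the single fact that a central derivation kills every product and every ``$\delta$-twisted'' product. Throughout I write $\ast$ for a generic operation among $\dashv,\vdash,\bot$, and I recall that membership in $ZD_{(\gamma^{s}\xi^{r})}(S)$, in $D_{(\gamma^{s}\xi^{r})}(S)$ and in $C_{(\gamma^{s}\xi^{r})}(S)$ all include the commutation relations $\gamma\circ\delta=\delta\circ\gamma$ and $\xi\circ\delta=\delta\circ\xi$, so these never need checking again.

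First I would prove $ZD_{(\gamma^{s}\xi^{r})}(S)\subseteq D_{(\gamma^{s}\xi^{r})}(S)\cap C_{(\gamma^{s}\xi^{r})}(S)$. Fix $\delta\in ZD_{(\gamma^{s}\xi^{r})}(S)$; then by definition $\delta(d\ast v)=0$ and $\delta(d)\ast v=0$ for all $d,v\in S$ and all $\ast$, and consequently also $d\ast\delta(v)=0$. Plugging these into the defining identity of a $(\gamma^{s}\xi^{r})$-derivation, the left-hand side $\delta(d\ast v)$ equals $0$ and the right-hand side $\delta(d)\ast\gamma^{s}\xi^{r}(v)+\gamma^{s}\xi^{r}(d)\ast\delta(v)$ is $0+0$; hence $\delta\in D_{(\gamma^{s}\xi^{r})}(S)$. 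Likewise each of the three expressions $\delta(d)\ast\gamma^{s}\xi^{r}(v)$, $\delta(d\ast v)$, $\gamma^{s}\xi^{r}(d)\ast\delta(v)$ appearing in the centroid relations vanishes, so those relations hold trivially and $\delta\in C_{(\gamma^{s}\xi^{r})}(S)$.

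For the reverse inclusion, take $\delta\in D_{(\gamma^{s}\xi^{r})}(S)\cap C_{(\gamma^{s}\xi^{r})}(S)$. For each $\ast$ I would combine the derivation identity $\delta(d\ast v)=\delta(d)\ast\gamma^{s}\xi^{r}(v)+\gamma^{s}\xi^{r}(d)\ast\delta(v)$ with the two centroid identities $\delta(d\ast v)=\delta(d)\ast\gamma^{s}\xi^{r}(v)$ and $\delta(d\ast v)=\gamma^{s}\xi^{r}(d)\ast\delta(v)$. Substituting the first centroid identity into the derivation identity forces $\gamma^{s}\xi^{r}(d)\ast\delta(v)=0$ for all $d,v$; the second centroid identity then gives $\delta(d\ast v)=0$, and feeding this back into the derivation identity gives $\delta(d)\ast\gamma^{s}\xi^{r}(v)=0$. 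The middle one of these is already the first batch of central-derivation conditions. To pass from $\gamma^{s}\xi^{r}(d)\ast\delta(v)=0$ and $\delta(d)\ast\gamma^{s}\xi^{r}(v)=0$ to $d\ast\delta(v)=0$ and $\delta(d)\ast v=0$ for \emph{arbitrary} $d,v\in S$, I would invoke bijectivity of $\gamma$ and $\xi$ (the standing hypothesis in this BiHom framework, already used in Proposition \ref{p1}): writing $v=\gamma^{s}\xi^{r}(v')$ and $d=\gamma^{s}\xi^{r}(d')$ reduces the general case to the ones already proved. Hence $\delta\in ZD_{(\gamma^{s}\xi^{r})}(S)$, which finishes the proof.

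The step I expect to be the genuine obstacle is precisely the last one: every relation obtained by playing the derivation axiom against the centroid axiom naturally lives inside the image of $\gamma^{s}\xi^{r}$, and upgrading it to an identity valid on all of $S$ needs surjectivity of the twisting maps; without that hypothesis one would have to read the central-derivation axioms themselves with $\gamma^{s}\xi^{r}$ inserted for the statement to hold. Everything else is routine bookkeeping, performed verbatim for $\dashv$, $\vdash$ and $\bot$, and the $\mathbb{Z}_{2}$-grading plays no role here since none of the three defining relations involved carries a Koszul sign.
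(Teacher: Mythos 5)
Your proposal follows the same overall route as the paper: two inclusions, the first by observing that everything in sight vanishes, the second by playing the derivation identity against the centroid identities. In the reverse direction you are in fact more explicit than the paper, whose own argument simply writes the Leibniz identity with ``$=0$'' appended and never derives the vanishing; your substitution scheme (centroid identity into Leibniz identity forces $\gamma^{s}\xi^{r}(d)\ast\delta(v)=0$, feed back to get $\delta(d\ast v)=0$ and $\delta(d)\ast\gamma^{s}\xi^{r}(v)=0$) is the argument the paper presumably intends, and your remark that surjectivity of $\gamma,\xi$ is needed to strip the twists correctly identifies the crux that the paper silently skips.

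There are, however, genuine gaps. First, in the forward inclusion the claim that $\delta(d\ast v)=0$ and $\delta(d)\ast v=0$ ``consequently'' give $d\ast\delta(v)=0$ is not a deduction: the definition of a central derivation here is one-sided, and $\gamma^{s}\xi^{r}(d)\ast\delta(v)=0$ is exactly what you need both for $\delta\in D_{(\gamma^{s}\xi^{r})}(S)$ and for the equality $\gamma\xi(d)\ast\delta(v)=\delta(d)\ast\delta(v)$ in the centroid condition; you must either strengthen the definition to two-sided centrality or supply an argument. Second, and more seriously, the centroid identities you substitute, namely $\delta(d\ast v)=\delta(d)\ast\gamma^{s}\xi^{r}(v)=\gamma^{s}\xi^{r}(d)\ast\delta(v)$, are not the ones the paper defines: its $(\gamma^{s}\xi^{r})$-centroid condition only equates $\delta(d)\ast\gamma\xi(v)$, $\delta(d)\ast\delta(v)$ and $\gamma\xi(d)\ast\delta(v)$, and involves $\delta(d\ast v)$ nowhere, so the key substitution has no axiom to rest on. (The paper's own proof suffers from the same defect, so with the definitions as literally stated the proposition does not follow for either argument.) Third, invertibility of $\gamma$ and $\xi$ is nowhere assumed for these structures --- Proposition \ref{p1} assumes an invertible $l$, not invertible structure maps --- so the final untwisting step rests on an unstated hypothesis; as you yourself note, without it one would have to reformulate the central-derivation axioms with $\gamma^{s}\xi^{r}$ inserted for the statement to close.
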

\begin{proof}
Let $\delta$ be an of $ZD_{({\gamma^{s}\xi^{r}})}(S)$. Then $g$ is a central derivation, this means that it holds the following conditions
\begin{align*}
\delta(d\vdash v)&=\delta(d)\vdash v=0\\
\delta(d\dashv v)&=\delta(d)\dashv v=0\\
\delta(d\bot v)&=\delta(d)\bot v=0
\end{align*}
for $d,v \in S$. Since $g$ is a central derivation, it is also a $({\gamma^{s}\xi^{r}})$-derivation. Thus, $\delta \in D_{({\gamma^{s}\xi^{r}})}(S)$.
Moreover, as $\delta$ is central, it is also a $({\gamma^{s}\xi^{r}})$-central mapping. Therefore, $\delta \in C_{({\gamma^{s}\xi^{r}})}(S)$.
Hence, $ZD_{({\gamma^{s}\xi^{r}})}(S) =D_{({\gamma^{s}\xi^{r}})}(S) \cap C_{({\gamma^{s}\xi^{r}})}(S).$
Now, take $\delta\in D_{({\gamma^{s}\xi^{r}})}(S) \cap C_{({\gamma^{s}\xi^{r}})}(S).$ Hence, $\delta$ is both a $({\gamma^{s}\xi^{r}})$-derivation and a $({\gamma^{s}\xi^{r}})$-central mapping. Since $\delta$ is a $({\gamma^{s}\xi^{r}})$-derivation, it satisfies Leibniz's rule and the below conditions
\begin{align*}
\delta(d\vdash v)&=\gamma^{s}(\delta(d))\vdash v+d \vdash \delta(v)=0\\
\delta(d\dashv v)&=\gamma^{s}(\delta(d))\dashv v+d \dashv \delta(v)=0\\
\delta(d\bot v)&=\gamma^{s}(\delta(d))\bot v+d \bot \delta(v)=0
\end{align*}
for $d,v \in S$. Also, as $\delta$ is a $({\gamma^{s}\xi^{r}})$-central mapping, it commutes with $\gamma^{s}$ and $\xi^{r}$.
Thus, $\delta$ satisfies all the conditions to be a $({\gamma^{s}\xi^{r}})$-central derivation, this means that $g \in ZD_{({\gamma^{s}\xi^{r}})}(S)$.
Therefore, $D_{({\gamma^{s}\xi^{r}})}(S) \cap C_{({\gamma^{s}\xi^{r}})}(S) \subseteq ZDer_{({\gamma^{s}\xi^{r}})}(S)$. Combining both inclusions, we have  $$ZD_{({\gamma^{s}\xi^{r}})}(S) =D_{({\gamma^{s}\xi^{r}})}(S) \cap C_{({\gamma^{s}\xi^{r}})}(S).$$
\end{proof}

\begin{lem}
Given a BiHom-supertrialgebra $(S,\dashv,\vdash,\bot,\gamma,\xi)$. Then
\item [(i)]$[D(S),C(S)] \subseteq C(S)$,
\item [(ii)]$ C(S)\oplus D(S) \subseteq D(S)$.
\end{lem}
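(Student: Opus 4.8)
The plan is to prove both inclusions by direct verification, expanding the relevant operator on a product $d\ast v$ with $\ast\in\{\dashv,\vdash,\bot\}$; since the three products are handled by identical manipulations I would carry the computation out once, say for $\bot$ (the operation appearing in the super-commutator), and remark that $\dashv$ and $\vdash$ go through \emph{verbatim}. Two facts are used throughout: first, every $\delta\in D_{(\gamma^{s}\xi^{r})}(S)$ and every $\psi\in C_{(\gamma^{s'}\xi^{r'})}(S)$ commutes with $\gamma$ and $\xi$, hence with $\gamma^{t}\xi^{u}$ for all $t,u\ge 0$; second, the defining identities of the centroid let one \emph{transfer} a centroid map from one factor of a product to the other, that is $\psi(d\ast v)=\psi(d)\ast\gamma^{s'}\xi^{r'}(v)=(-1)^{|\psi||d|}\gamma^{s'}\xi^{r'}(d)\ast\psi(v)$ for $\psi\in C_{(\gamma^{s'}\xi^{r'})}(S)$.

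For (i), fix $\delta\in D_{(\gamma^{s}\xi^{r})}(S)$ and $\psi\in C_{(\gamma^{s'}\xi^{r'})}(S)$, put $\theta=[\delta,\psi]=\delta\psi-(-1)^{|\delta||\psi|}\psi\delta$, and aim to show $\theta\in C_{(\gamma^{s+s'}\xi^{r+r'})}(S)$. I would compute $\theta(d\bot v)$ by first evaluating $\delta\psi(d\bot v)$ — applying the centroid identity to $\psi(d\bot v)$, then the $(\gamma^{s}\xi^{r})$-Leibniz rule to $\delta$, and pushing the leftover $\delta$ onto a single factor via the transfer identity together with $\delta\gamma^{s'}\xi^{r'}=\gamma^{s'}\xi^{r'}\delta$ — and likewise $\psi\delta(d\bot v)$. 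Subtracting, the terms $\delta\psi(d)\bot\gamma^{s+s'}\xi^{r+r'}(v)$ and $-(-1)^{|\delta||\psi|}\psi\delta(d)\bot\gamma^{s+s'}\xi^{r+r'}(v)$ combine to $\theta(d)\bot\gamma^{s+s'}\xi^{r+r'}(v)$, while the two ``mixed'' contributions carrying $\psi\delta(v)$ on the right-hand factor cancel once the sign $(-1)^{|\delta||\psi|}$ coming from the super-commutator is taken into account. What remains is exactly the centroid relation $\theta(d\bot v)=\theta(d)\bot\gamma^{s+s'}\xi^{r+r'}(v)=(-1)^{|\theta||d|}\gamma^{s+s'}\xi^{r+r'}(d)\bot\theta(v)$ (the last equality obtained symmetrically); since $\gamma\theta=\theta\gamma$ and $\xi\theta=\theta\xi$ are immediate, $\theta\in C_{(\gamma^{s+s'}\xi^{r+r'})}(S)\subseteq C(S)$, which gives $[D(S),C(S)]\subseteq C(S)$.

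For (ii), which asserts that the composite of a centroid element with a derivation is again a derivation, fix $\psi\in C_{(\gamma^{s'}\xi^{r'})}(S)$ and $\delta\in D_{(\gamma^{s}\xi^{r})}(S)$ and show $\psi\circ\delta\in D_{(\gamma^{s+s'}\xi^{r+r'})}(S)$. Expanding $(\psi\delta)(d\bot v)=\psi\bigl(\delta(d)\bot\gamma^{s}\xi^{r}(v)\bigr)+(-1)^{|\delta||d|}\psi\bigl(\gamma^{s}\xi^{r}(d)\bot\delta(v)\bigr)$, then applying the centroid transfer identity to the first summand on its left factor and to the second summand on its right factor, and using $\psi\gamma^{s}\xi^{r}=\gamma^{s}\xi^{r}\psi$, the two summands assemble into $(\psi\delta)(d)\bot\gamma^{s+s'}\xi^{r+r'}(v)+(-1)^{|\psi\delta||d|}\gamma^{s+s'}\xi^{r+r'}(d)\bot(\psi\delta)(v)$, which is precisely the $(\gamma^{s+s'}\xi^{r+r'})$-Leibniz rule; the commutations $\gamma(\psi\delta)=(\psi\delta)\gamma$ and $\xi(\psi\delta)=(\psi\delta)\xi$ are clear. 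Hence $\psi\circ\delta\in D(S)$, i.e. $C(S)\circ D(S)\subseteq D(S)$.

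The computations are mechanical once set up correctly; the one step that needs care — and where a naive calculation goes wrong — is the bookkeeping of the Koszul signs and of the exponents of $\gamma,\xi$. One must use the super-Leibniz and super-centroid identities with their correct sign prefactors (the degrees of $\psi(d)$, $\delta(d)$ entering as $|\psi|+|d|$, $|\delta|+|d|$), so that in (i) the two unwanted terms acquire matching signs and cancel, and in (ii) the residual sign collapses to $(-1)^{|\psi\delta||d|}$; and one must keep the exponent pairs $(s,r)$ and $(s',r')$ additive under composition, which is what places $\theta$ and $\psi\circ\delta$ in the intended graded components of $C(S)$ and $D(S)$.
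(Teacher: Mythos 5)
Your proposal is correct and follows essentially the same route as the paper: a direct expansion of the commutator (for (i)) and of the composite (for (ii)) on a product $d\ast v$, using the $(\gamma^{s}\xi^{r})$-Leibniz rule, the commutation of all the maps with $\gamma$ and $\xi$, and the centroid transfer identity, with the cross terms cancelling exactly as you describe; your version is in fact tidier than the paper's, since you track the Koszul signs and the additivity of the exponent pairs $(s,r)$, $(s',r')$, which the paper's unsigned and partly circular computation glosses over. The only caveat --- one shared with the paper's own argument --- is that both proofs rely on the identity $\psi(u\ast v)=\psi(u)\ast\gamma^{s'}\xi^{r'}(v)=\gamma^{s'}\xi^{r'}(u)\ast\psi(v)$, which is the standard centroid axiom but is not literally contained in the paper's stated definition of $C_{(\gamma^{s}\xi^{r})}(S)$ (that definition only constrains products of images such as $\psi(u)\ast\gamma\xi(v)$ and $\psi(u)\ast\psi(v)$, never $\psi$ applied to a product), so you should state it explicitly as the form of the centroid condition you are using.
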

\begin{proof}
 \text{(i)}    To prove this, we need to show that the commutator of any element in $D(S)$
 with any element in $C(S)$ is also in $C(S)$.
 We take $\delta\in D(S)$ and $c\in C(S)$. Then, we have
 $\delta(d)\vdash c(v)-c(d)\vdash \delta(v) \in C(S)$.
 Since $\delta \in D(S)$ and $c \in C(S),$ by using definitions we rewrite as
 \begin{align*}
\delta(d)\vdash c(v)-c(d)\vdash \delta(v)  &=(\delta(d)\vdash \gamma \xi(c(v))-\gamma \xi(c(d))\vdash \delta(v))\\
 &=\gamma \xi(\delta(d))\vdash c(v))-(c(d))\vdash \gamma \xi (\delta(v)))\\
 &=\gamma \xi(\delta(d))\vdash \delta(c(v)))-(\delta(c(d))\vdash \gamma \xi(\delta(v)))\\
 &= \delta(d)\vdash \gamma \xi \delta(v)))-(\gamma \xi \delta(d))\vdash \delta(v))\\
 &= \delta(d)\vdash c(v)-c(d)\vdash \delta(v).
 \end{align*}
 Hence, the commutator of $\delta$ and $c$ is also in $C(S)$, as required.\\
\text{(ii)} Let $c \in C(S)$ and $f \in D(S)$. Then, we have
$\delta(d\vdash c(v))=\delta(d)\vdash c(v)+d\vdash \delta(c(v)).$
By using the fact that $c \in C(S)$,  the right side is expressed by
\begin{align*}
 &=(\delta(d)\vdash \gamma \xi(c(v))+\gamma \xi(d)\vdash \delta(c(v))\\
 &= (\delta(d))\vdash \delta(c(v)))+(\gamma \xi(d)\vdash \delta(c(v)))\\
 &= (\delta(d\vdash \delta(v)))+(\gamma \xi(d)\vdash \delta(c(v)))\\
 &= (\delta(d\vdash \delta(v)))+(\delta(\gamma \xi(d)\vdash c(v)))\\
 &= \delta(d)\vdash \delta(v)+\gamma \xi(d)\vdash c(v)).
\end{align*}
Similary, we can prove the same for the other cases.
Therefore, $ C(S)\oplus D(S) \subseteq D(S)$, which proves (ii).
\end{proof}

\begin{defn}
  The set of linear mappings $\delta$ with
$$ d\circ \delta(v)= \delta(d)\circ v$$ for all $d,v \in S$ where $\circ =\dashv,\vdash$ and $\bot$ respectively  is said to be the $(\gamma^{s}\xi^{r})$-quasicentroid $QC_{(\gamma^{s}\xi^{r})}(S)$.
We set
$$QC(S) = (\bigoplus_{s,r\geq 0}QC_{({\gamma^{s}\xi^{r}})})(S)_{\overline{0}}+(\bigoplus_{s,r\geq 0}QC_{({\gamma^{s}\xi^{r}})})(S)_{\overline{1}}.$$
\end{defn}

 \begin{lem}
 Given a BiHom-supertrialgebra $(S,\dashv,\vdash,\bot,\gamma,\xi)$.
\item [(i)]$[QD(S),QC(S)] \subseteq QC(S)$,
\item [(ii)]$ C(S) \subseteq QD(S)$,
\item [(iii)]$[QC(S),QC(S)] \subseteq QD(S)$,
\item [(iv)]$QD(S) + QC(S) \subseteq GD(S)$
\end{lem}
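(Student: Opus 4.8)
The plan is to establish each of the four inclusions by a uniform scheme: take an arbitrary homogeneous element (or pair of elements) of the set on the left-hand side, substitute it into the defining relations of the set on the right-hand side, and reduce everything to the defining relations of $QD(S)$, $QC(S)$ and $C(S)$ recalled above. Compatibility with $\gamma$ and $\xi$ will cost nothing, since every operator in play commutes with $\gamma$ and $\xi$ and this property survives sums, scalar multiples, compositions and supercommutators. I would work throughout with homogeneous operators $\delta\in QD_{(\gamma^{s}\xi^{r})}(S)$ and $\theta\in QC_{(\gamma^{s'}\xi^{r'})}(S)$ of definite parity, tracking the outcome into the $(\gamma^{s+s'}\xi^{r+r'})$-graded piece of the correct even/odd summand, which is precisely what the direct-sum decompositions of $QD(S)$, $QC(S)$, $GD(S)$ and $C(S)$ are designed to accommodate.

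For (ii), I would begin with $\delta\in C(S)$ and exploit the equalities in the definition of the centroid, $\delta(d)\dashv\gamma\xi(v)=\delta(d)\dashv\delta(v)=\gamma\xi(d)\dashv\delta(v)$ together with their analogues for $\vdash$ and $\bot$, which express that a centroid element acts on each product as a twisted multiplier. These let one rewrite the combination $\delta(d)\dashv\gamma^{s}\xi^{r}(v)+\gamma^{s}\xi^{r}(d)\dashv\delta(v)$ as a single term that is a fixed linear function of $d\dashv v$; taking $\delta'$ to be that induced linear map exhibits $\delta$ as a $(\gamma^{s}\xi^{r})$-quasiderivation, so $C(S)\subseteq QD(S)$.

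The two commutator inclusions (i) and (iii) are the Leger--Luks cancellation transplanted into the BiHom-super setting. For (i), fix $D\in QD(S)$ with associated map $D'$ and $\theta\in QC(S)$, and set $[D,\theta]=D\theta-(-1)^{|D||\theta|}\theta D$. I would expand $[D,\theta](d)\dashv v$ by applying the twisted Leibniz rule of $D$ to the $D\theta$ summand and the transfer identity $\theta(a)\dashv b=a\dashv\theta(b)$ to the $\theta D$ summand, do the same for $d\dashv[D,\theta](v)$, and check that the two resulting sums agree term by term after matching the $\gamma,\xi$-twists; repeating the computation for $\vdash$ and $\bot$ gives $[D,\theta]\in QC(S)$. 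For (iii), with $\theta_{1},\theta_{2}\in QC(S)$, applying $\theta_{i}(a)\dashv b=a\dashv\theta_{i}(b)$ twice to each of $\theta_{1}\theta_{2}$ and $\theta_{2}\theta_{1}$ turns the twisted Leibniz combination built from $[\theta_{1},\theta_{2}]$ into four bracket-type terms that cancel in pairs, so $[\theta_{1},\theta_{2}]$ is a quasiderivation with associated map $0$; hence $[\theta_{1},\theta_{2}]\in QD(S)$.

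Finally, for (iv), given $D\in QD(S)$ with associated map $D'$ and $\theta\in QC(S)$, I would verify that $D+\theta$ satisfies the definition of a generalized $(\gamma^{s}\xi^{r})$-derivation: when $D+\theta$ is applied to a product, the two $\theta$-contributions cancel one another by $\theta(a)\dashv b=a\dashv\theta(b)$ (and the $\vdash,\bot$ analogues), leaving exactly the twisted Leibniz combination of $D$, which is controlled by $D'$; so the auxiliary family $\delta^{1},\delta^{2},\delta^{3}$ demanded by the definition of $GD(S)$ can be assembled from $D'$ and $D-\theta$, and $D+\theta\in GD(S)$. I expect the real difficulty to be twofold: first, the purely clerical task of keeping the Koszul signs $(-1)^{|\cdot||\cdot|}$ and the exponents of $\gamma$ and $\xi$ mutually consistent across the three operations $\dashv,\vdash,\bot$ simultaneously, made worse by the fact that the $QC(S)$ relation carries no $\gamma^{s}\xi^{r}$-twist whereas the $QD(S)$ and $GD(S)$ relations do, so the twists must be steered carefully into the correct graded summand; and second, pinning down the intended reading of the auxiliary family in the definition of $GD(S)$ so that step (iv) acquires an unambiguous target.
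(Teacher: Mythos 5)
Your proposal is correct and follows the same underlying scheme the paper intends, namely the classical Leger--Luks-style manipulations (expand the supercommutator, push the quasicentroid transfer identity $\theta(a)\circ b=a\circ\theta(b)$ through the twisted Leibniz rule, and read off which class the result lands in), so in that sense it is not a different route. It is worth saying, though, that your plan is substantially more complete than the paper's own argument: the paper's proof of (i) starts from an incorrect expansion of $c(d\dashv v)$ (the quasicentroid condition is not a Leibniz-type identity) and then evaluates $[\delta,c](d)$ by substituting expressions that depend on a phantom variable $v$, while (iii) and (iv) are dismissed with one-line appeals to "a similar approach" and to $QD(S)$ and $C(S)$ being "connected with" $GD(S)$. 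Your explicit constructions --- the associated map $0$ for $[\theta_1,\theta_2]$ in (iii), and the triple built from $D'$ and $D-\theta$ witnessing $D+\theta\in GD(S)$ in (iv) --- are exactly the missing content. The one point you rightly flag as delicate is genuine: as literally written, the defining relation of $QC_{(\gamma^{s}\xi^{r})}(S)$ carries no $\gamma^{s}\xi^{r}$-twist, whereas the commutator $[D,\theta]$ in (i) naturally satisfies only the twisted identity $[D,\theta](d)\circ\gamma^{s}\xi^{r}(v)=\gamma^{s}\xi^{r}(d)\circ[D,\theta](v)$; the inclusion (i) holds only under the (surely intended) twisted reading of the quasicentroid, so your proof should state that reading explicitly rather than leave it implicit.
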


\begin{proof}
\text{(i)} Let $\delta \in  QD(S)$ and $c \in QC(S)$. Then, by using definition, we have
 \begin{align*}
\delta(d\dashv v)&=\delta(p)\dashv \gamma^{s}\xi^{r}(v)+\gamma^{s}\xi^{r}(d) \dashv \delta(v)\\
c(d\dashv v)&=\gamma\xi(d)\dashv \delta(v)-\delta(d)\dashv\gamma\xi(v).
\end{align*}
Now, we compute the commutator $[\delta,c]$ as
\begin{align*}
[\delta,c](d)&=\delta(c(d))-c(\delta(d))\\
&=\delta(\gamma\xi(d)\dashv \delta(v)-\delta(d)\dashv\gamma\xi(v))-(\gamma\xi(d)\dashv \delta(v)-\delta(d)\dashv\gamma\xi(v))\dashv\gamma^{s}\xi^{r}(v)\\
&-\gamma^{s}\xi^{r}(d)\dashv(\gamma\xi(v)\dashv \delta(d)-\delta(v)\dashv\gamma\xi(d)).
\end{align*}
Using the properties of quasiderivations and centroids, we can simplify this expression
\begin{align*}
&\delta(\gamma\xi(d))\dashv \delta(v)-\delta(\delta(d))\dashv\gamma\xi(v)-(\gamma\xi(d)\dashv \delta(\delta(v))-\delta(d)\dashv \delta(\gamma\xi(v)))\dashv\gamma^{s}\xi^{r}(v)-\gamma^{s}\xi^{r}(p)\\
&\dashv(\gamma\xi(v)\dashv
\delta(d)-\delta(v)\dashv\gamma\xi(d))\\
&=\delta(\gamma\xi(d))\dashv \delta(v)-\delta(\delta(d))\dashv\gamma\xi(v)- \gamma\xi(d)\dashv \delta(\delta(v))+\delta(d)\dashv \delta(\gamma\xi(v))-\gamma^{s}\xi^{r}(v)\dashv\gamma\xi(d)\\
&\dashv \delta(d)+\gamma^{s}\xi^{r}(d)\dashv \delta(v)-\gamma\xi(v) \dashv\gamma^{s}\xi^{r}(d)\dashv \delta(d)+\gamma\xi(d)\dashv\gamma^{s}\xi^{r}(v)\dashv \delta(v)\\
&=\delta(\gamma\xi(d))-\gamma^{s}\xi^{r}(d)\dashv \gamma \xi(v))\dashv \delta(v)-(\delta(\delta(d))-\gamma^{s}\xi^{r}(v)\dashv\gamma\xi(d))\dashv\gamma\xi(v)-(\gamma\xi(d)\dashv \delta(\delta(v))-\delta(d)\\
&\dashv \delta(\gamma\xi(v)))\dashv \gamma^{s}\xi^{r}(v)-(\gamma\xi(v)\dashv \gamma^{s}\xi^{r}(d)\dashv \delta(d)-
\gamma\xi(d)\dashv \gamma^{s}\xi^{r}(v) \dashv \delta(v)).
\end{align*}
 This expression shows that $[\delta,c] \in QC(S)$.
  Hence, $[QD(S),QC(S)] \subseteq QC(S).$\\
\text{(ii)} Here our aim is to prove that centroids are contained within quasiderivations, i.e., $ C(S) \subseteq QD(S)$.
Let $c \in C(S)$. By using definition, $c$ satisfies certain properties, which we can leverage to show that it indeed belongs to the set of quasiderivations.
Given the properties of centroids, we can choose appropriate $\delta^{\prime}$ such that
\begin{align*}
\delta^{\prime}(d\dashv v)&=c(d)\dashv\gamma^{s}\xi^{r}(v)+\gamma^{s}\xi^{r}(d)\dashv c(v),\\
\delta^{\prime}(d\vdash v)
&=c(d)\vdash\gamma^{s}\xi^{r}(v)+\gamma^{s}\xi^{r}(d)\vdash c(v)
\end{align*}
and $$\delta^{\prime}(d\bot v)=c(d)\bot\gamma^{s}\xi^{r}(v)+\gamma^{s}\xi^{r}(d)\bot c(v).$$
Therefore, $c$ can be represented as a quasiderivation $\delta^{\prime}$. Hence $ C(S) \subseteq QD(S)$.\\
\text{(iii)} To prove that the commutator of centroids lies within quasiderivations, i.e.,
$[QC(S),QC(S)] \subseteq QD(S)$, we can follow a similar approach to (i).\\
\text{(iv)} To show that $QD(S) + QC(S) \subseteq GD(S)$, we can use that $QD(S)$ and $C(S)$ connected with $GD(S)$.
Therefore, the proof is completed.
\end{proof}
\begin{prop}
 If $(S,\dashv,\vdash,\bot,\gamma,\xi)$ is a BiHom-supertrialgebra with trivial center,
then we have
$$D(S)\oplus C(S) \subseteq QD(S).$$
\end{prop}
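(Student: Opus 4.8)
The plan is to deduce the statement almost entirely from results already proved, isolating a single new fact. Split the claim into two parts: (a) $D(S)+C(S)\subseteq QD(S)$ as subspaces of $\mathrm{End}(S)$, and (b) the sum $D(S)+C(S)$ is direct, i.e. $D(S)\cap C(S)=0$. Granting both, the assertion $D(S)\oplus C(S)\subseteq QD(S)$ follows at once. Part (a) needs no new work: the Remark after the definition of $QD(S)$ gives $D(S)\subseteq QD(S)$, and part (ii) of the lemma on the quasicentroid gives $C(S)\subseteq QD(S)$. Since $QD(S)$ is a graded linear subspace of $\mathrm{End}(S)$ — by definition the direct sum of the pieces $QD_{(\gamma^{s}\xi^{r})}(S)$ — it is closed under addition, hence $D(S)+C(S)\subseteq QD(S)$.

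The content is part (b), and this is where the hypothesis $Z(S)=0$ enters. By the earlier Proposition, $ZD_{(\gamma^{s}\xi^{r})}(S)=D_{(\gamma^{s}\xi^{r})}(S)\cap C_{(\gamma^{s}\xi^{r})}(S)$ for all $s,r$; summing over $s,r\ge 0$ and over the two homogeneous components yields $D(S)\cap C(S)=ZD(S)$. So it suffices to show $ZD(S)=0$.

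To do this I would take a homogeneous $\delta\in ZD_{(\gamma^{s}\xi^{r})}(S)$. By the definition of a central derivation, $\delta(d)\circ v=0$ for all $d,v\in S$ and every $\circ\in\{\dashv,\vdash,\bot\}$; in particular $\delta(w)\circ v=0$ for all $w,v\in S$. As $\delta$ is simultaneously a $(\gamma^{s}\xi^{r})$-derivation, the Leibniz identity gives
\[
0=\delta(d\circ v)=\delta(d)\circ\gamma^{s}\xi^{r}(v)+\gamma^{s}\xi^{r}(d)\circ\delta(v),
\]
and since the first summand vanishes, $\gamma^{s}\xi^{r}(d)\circ\delta(v)=0$ for all $d,v$. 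Using $\gamma\circ\delta=\delta\circ\gamma$ and $\xi\circ\delta=\delta\circ\xi$, rewrite $\gamma\xi(\delta(v))=\delta(\gamma\xi(v))$ to obtain $\gamma\xi(\delta(v))\circ v'=\delta(\gamma\xi(v))\circ v'=0$ and, using multiplicativity (and surjectivity where needed) of the structure maps, also $v'\circ\gamma\xi(\delta(v))=0$ for every $v'$. Hence $\delta(v)\in Z_{S}(S)=Z(S)$ for all $v$; since $Z(S)=0$ this forces $\delta=0$, so $ZD(S)=0$, and therefore $D(S)\cap C(S)=0$.

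Combining (a) and (b), $D(S)+C(S)$ is a direct sum contained in $QD(S)$, which is the claim. The step I expect to be the main obstacle is the vanishing of $ZD(S)$: one must verify that the central-derivation relations, together with the commutation of $\delta$ with $\gamma$ and $\xi$, genuinely push the entire image of $\delta$ into the \emph{twisted} center $Z(S)=\{u:\gamma\xi(u)\ast S=S\ast\gamma\xi(u)=0\}$, and absorbing the $\gamma\xi$-twist (and reaching an arbitrary left factor in the second identity) is precisely where one uses properties of $\gamma,\xi$ beyond mere linearity. The super-sign bookkeeping everywhere else is routine and does not affect the argument.
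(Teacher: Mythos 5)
Your proposal is correct and follows essentially the same route as the paper's own (much terser) proof: both arguments reduce the claim to the two facts that $D(S)$ and $C(S)$ are subspaces of $QD(S)$ and that any $\delta\in D(S)\cap C(S)$ annihilates all products, forcing its image into the center $Z(S)$ and hence $\delta=0$ under the trivial-center hypothesis. Your detour through $ZD(S)=D(S)\cap C(S)$ and your explicit handling of the $\gamma\xi$-twist in the definition of $Z(S)$ merely make explicit the steps the paper leaves implicit.
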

\begin{proof}
 $D(S)$ and $C(S)$ are subspaces of $QD(S)$.
Furthermore, if $\delta \in D(S)\cap
C(S)$, then for $d, v \in S$, $\delta([d,v]) = 0$ for
every $d,v \in S$. Therefore $\delta(d) \in Z(S)$, hence $\delta= 0$.
\end{proof}



	
\end{document}